\documentclass[11pt]{article}



\title{Optimal rates of convergence for persistence diagrams in Topological Data Analysis}
\author{Fr\'{e}d\'{e}ric Chazal\thanks{\url{frederic.chazal@inria.fr}}, Marc Glisse \thanks{\url{marc.glisse@inria.fr}}, Catherine Labru\`ere \thanks{\url{clabruer@u-bourgogne.fr}}, Bertrand Michel \thanks{\url{bertrand.michel@upmc.fr}}}
\date{\today}

\usepackage{amsmath,amsthm}
\usepackage{amssymb,latexsym}
\usepackage{eucal}

\usepackage{mathabx}

\usepackage{graphicx}
\usepackage[usenames,dvipsnames]{color}
\usepackage{pdfcolmk}
\usepackage[cmtip,arrow]{xy}
\usepackage{pb-diagram,pb-xy}

\usepackage{fullpage}
\setlength{\parskip}{2ex}
\setlength{\parindent}{0em}

\usepackage{hyperref}

\usepackage{bbm}


\numberwithin{equation}{section}

\theoremstyle{plain}

\newtheorem{theorem}[equation]{Theorem}
\newtheorem{lemma}[equation]{Lemma}
\newtheorem{proposition}[equation]{Proposition}
\newtheorem{corollary}[equation]{Corollary}

\theoremstyle{definition}

\newtheorem{definition}[equation]{Definition}
\newtheorem*{definition*}{Definition}

\newtheorem*{example*}{Example}

\newtheorem*{notation*}{Notation}

\theoremstyle{remark}
\newtheorem*{remark}{Remark}








\newcommand{\pdfont}{\mathsf}
  
  \newcommand{\dgm}{\pdfont{dgm}}



\newcommand{\metricfont}{\mathrm}
  \newcommand{\bottle}{\metricfont{d_b}}




\newcommand{\Rips}{\operatorname{\mathrm{Rips}}}
\newcommand{\Cech}{\operatorname{\mathrm{Cech}}}

\newcommand{\Filt}{\operatorname{\mathrm{Filt}}}
\newcommand{\alphaC}{\operatorname{\mathrm{C}_\alpha}}

\newcommand{\e}{\varepsilon}
\newcommand{\one}{\mathbbm{1}}
\newcommand{\R}{\mathbb{R}}
\newcommand{\N}{\mathbb{N}}
\newcommand{\Z}{\mathbb{Z}}

\newcommand{\X}{\mathbb{X}}  
\newcommand{\bigM}{\mathbb{M}} 
\newcommand{\pk}{\operatorname{pk}}
\newcommand{\cov}{\operatorname{cv}}
\newcommand{\dgh}{\metricfont{d_{\textrm{\tiny GH}}}}
\newcommand{\dhaus}{\metricfont{d_{\textrm{\tiny H}}}}

\newcommand{\K}{\mathcal{C}}  

\newcommand{\hX}{\widehat \X}
\renewcommand{\P} {\mathbb{P}}
\newcommand{\E} {\mathbb{E}}
\newcommand{\TV}{\operatorname{TV}}

\begin{document}

\maketitle


\begin{abstract}
Computational topology  has recently known an important development toward data analysis, giving birth to the field of topological data analysis.
Topological persistence, or persistent homology, appears as a fundamental tool in this field. In this paper, we study topological persistence in
general metric spaces, with a statistical approach. We show that  the use of persistent homology can be naturally considered in general statistical
frameworks and persistence diagrams can be used as statistics with interesting convergence properties. Some numerical experiments are performed in
various contexts to illustrate our results.
\end{abstract}

\section{Introduction}

\paragraph{Motivations.}
During the last decades, the wide availability of measurement devices and simulation tools has led to an
explosion in the amount of available data in almost all domains of Science, industry, economy and even
everyday life. Often these data come as point clouds sampled in possibly high (or infinite) dimensional spaces. 
They are usually not uniformly distributed in
the embedding space but carry some geometric structure (manifold or more general
stratified space) which reflects important properties of the ``systems'' from which they have been generated.
Moreover, in many cases data are not embedded in Euclidean spaces and come as (finite) sets of points with pairwise distance information. This often happens, for example, with social network or sensor network data where each observation comes with a measure of its distance to the other observations: e.g., in a sensor network distributed in some domain, each sensor may not know its own position, but thanks to the strength of the signal received from the other sensors it may evaluate its distance from them. In such cases  data are just given as matrices of pairwise distances between the observations, i.e. as (discrete) metric spaces. Again, although they come as abstract spaces, these data often carry specific topological and geometric structures. 

A large amount of research has been done on dimensionality reduction, manifold learning
and geometric inference for data embedded in, possibly high dimensional, Euclidean spaces and assumed
to be concentrated around low dimensional manifolds; see for instance \cite{tenenbaum2000global,wang2012geometric,GenoveseEtAl2012} and the references
therein for recent results in this direction. However, the assumption of data lying on a manifold may fail in many applications. 
In addition, the strategy of representing data by points in Euclidean spaces may introduce large metric distortions as the data may lie in highly
curved spaces, instead of in flat Euclidean spaces, raising many difficulties in the analysis of metric data.
With the emergence of new geometric inference and algebraic topology tools, computational topology \cite{edelsbrunner2010computational} has recently
known an important development toward data analysis, giving birth to the field of Topological Data Analysis (TDA) \cite{c-td-09} whose aim is to infer
multiscale qualitative and quantitative relevant topological structures directly from the data. Topological persistence, more precisely {\em
persistent homology} appears as a fundamental tool for TDA. 
Roughly, {\it homology} (with coefficient in a field such as, e.g., $\mathbb{Z}/2\mathbb{Z}$) associates to any topological space $\mathbb{M}$, a
family of vector spaces (the so-called homology groups) $H_k(\mathbb{M})$, $k = 0, 1, \cdots$, each of them encoding $k$-dimensional features of
$\mathbb{M}$. The {\em $k^{th}$ Betti number} of $\mathbb{M}$, denoted $\beta_k$, is the dimension of $H_k(\mathbb{M})$ and measures the number of
$k$-dimensional features of $\mathbb{M}$: for example, $\beta_0$ is the number of connected components of $\mathbb{M}$, $\beta_1$ the number of
independent cycles or ``tunnels'', $\beta_2$ the number of ``voids'', etc...; see \cite{h-at-01} for a formal introduction to homology. 
Persistent homology provides a framework \cite{elz-tps-02, zc-cph-05,chazal2012structure} and efficient algorithms to encode the evolution of the
homology of families of nested topological spaces indexed by a set of real numbers 
that can often be seen as scale parameters, 
such as, e.g., the
sublevel sets of a function, union of growing balls, etc... The obtained multiscale topological information is then represented in a simple way as a
barcode or persistence diagram; see Figure \ref{fig:tore} and Section \ref{subsec:Simplcomp}.\\
In TDA, persistent homology has found applications in many fields, including neuroscience \cite{singh2008topological}, bioinformatics
\cite{kasson2007persistent}, shape classification \cite{ccgmo-ghsssp-09}, clustering \cite{cgos-pbcrm-2012} and  sensor networks
\cite{de2007homological}, to cite just a few.
It is usually computed for a {\it filtered simplicial complex} built on top of the available data, i.e. a nested family of simplicial complexes whose
vertex set is the data set (see Section \ref{subsec:Simplcomp}). The obtained persistence diagrams are then used as ``topological signatures'' to
exhibit the topological structure underlying the data; see Figure \ref{fig:PersistenceTDA}. The space of persistence diagrams is endowed with a
metric, the so-called {\em bottleneck distance}, that allows to compare the obtained signatures and thus to compare the topological structure underlying different data sets.
The relevance of this approach relies on stability results ensuring that close data sets, with respect to the so-called Gromov or Gromov-Hausdorff
distance, have close persistence diagrams \cite{steiner05stable,ccggo-ppmtd-09,chazal2012structure,cso-psgc-12}. However these results mainly remain
deterministic and thus often restrict to heuristic or exploratory uses in data analysis. 

The goal of this paper is to show that, thanks to recent results \cite{chazal2012structure,cso-psgc-12}, the use of persistent homology in TDA can be
naturally considered in general statistical frameworks and persistence diagrams can be used as statistics with interesting convergence properties. 

\begin{figure}
\centering
\includegraphics[width=0.9\columnwidth]{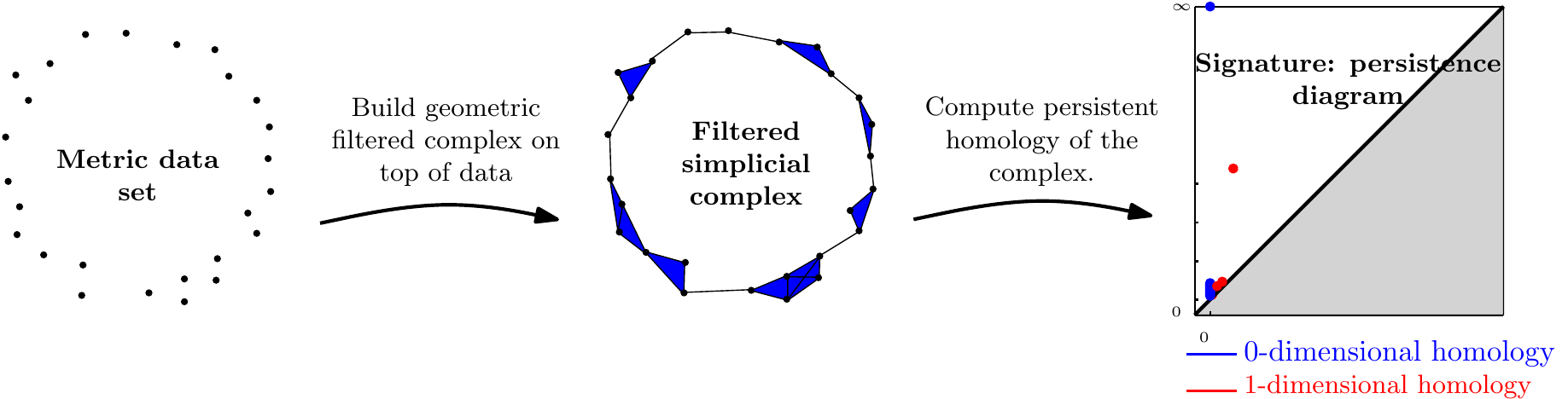}
\caption{A usual use of persistence in TDA.}
\label{fig:PersistenceTDA}
\end{figure}

\paragraph{Contribution.}
In this paper we assume that the available data is the realization of a probability distribution supported on an unknown compact metric space. We
consider the persistent homology of different filtered simplicial complexes built on top of the data and we study the rate of convergence of the
associated persistence diagrams to some well-defined persistence diagram associated to the support of the probability distribution, with a minimax
approach. 

More precisely, we assume that we observe a set of $n$ points $\hX_n = \{ X_1 \dots, X_n \}$ in a metric space $(\bigM, \rho)$, drawn i.i.d. from some
unknown measure $\mu$ whose support is a compact set denoted $\X_\mu \subseteq \bigM$. We then consider the persistent homology of the filtered
simplicial complexes $\Filt(\X_\mu)$ and $\Filt(\hX)$ built on top of $\X_\mu$ and $\hX_n$ respectively and we establish convergence rates of the
bottleneck distance between their persistence diagrams, $\bottle\left( \dgm (\Filt(\X_\mu)), \dgm (\Filt(\hX)) \right)$; see Section
\ref{sec:PersDiagInfMS} for explicit results.
It is important to notice that $\dgm (\Filt(\X_\mu))$ encodes topological properties of the support $\mu$ but not of the measure itself. As a
consequence, to obtain explicit convergence rates we assume that $\mu$ satisfies the so-called {\it $(a,b)$-standard assumption} for some constants
$a,b >0$: for any $x \in \X_\mu$ and any 
$r >0$, $\mu(B(x,r)) \geq \min(ar^b, 1)$. The following theorem illustrates the kind of results we obtain under such assumption.

{\bf Theorem (\ref{prop:lowerboundAbs} in Section \ref{sec:PersDiagInfMS}):}
{\em
Let $(\bigM,\rho)$, $a >0$ and $b  >0$ as above. Then for any measure $\mu$ satisfying the $(a,b)$-standard assumption
\begin{equation*}
\E \left[ \bottle  (   \dgm (\Filt(\X_\mu)) ,   \dgm(\Filt(\widehat \X_n)) ) \right]  \leq C  \left( \frac{ \ln n}  n  \right)^{1/b} 
\end{equation*}
where the constant $C$ only depends on  $a$ and $b$ (not on $\bigM$). 
Assume moreover that there exists a non isolated point $x$ in $\bigM$ and consider any sequence $(x_n) \in \left( \bigM  \setminus \{x\} \right) ^ {\mathbb N}$
such that $ \rho(x,x_n) \leq (a n) ^{-1/b} $. Then for any estimator $\widehat{\dgm}_n$ of
$\dgm (\Filt(\X_\mu))$:
$$
\liminf _{ n \rightarrow \infty}  \rho(x,x_n) ^{-1}  \E \left[ \bottle  (   \dgm (\Filt(\X_\mu)) , 
\widehat{\dgm}_n ) \right] \geq C '
$$
where $C'$ is an absolute constant.
}


Our approach relies on the general theory of persistence modules and our results follow from two recently proven properties of persistence diagrams \cite{cso-psgc-12, ccggo-ppmtd-09, chazal2012structure}.\\
First, as $\X_\mu$ can be any compact metric space (possibly infinite), the filtered complex $\Filt(\X_\mu)$ is usually not finite or even countable
and the existence of its persistence diagram cannot be established from the ``classical'' persistence theory \cite{zc-cph-05, elz-tps-02}. In our
setting, the existence of $\dgm (\Filt(\X_\mu))$ follows from the general persistence framework introduced in \cite{ccggo-ppmtd-09,
chazal2012structure}. Notice that although this framework is rather abstract and theoretical it does not have any practical drawback as only
persistence diagrams of complexes built on top of finite data are computed.\\
Second, a fundamental property of the persistence diagrams we are considering is their stability proven in \cite{cso-psgc-12}: the bottleneck distance
between $\dgm (\Filt(\X_\mu))$ and $\dgm(\Filt(\widehat \X_n))$ is upper bounded by twice the Gromov-Hausdorff distance between $\X_\mu$ and $\widehat
\X_n$. This result establishes a strong connection between our persistence estimation problem and support estimation problems. Upper bounds on the
rate of convergence of persistence diagrams are then easily obtained using the same arguments as the ones usually used to obtain convergence results
for support estimation with respect to the Hausdorff metric. We take advantage of this general remark to find rates of convergence of persistence
diagrams in general metric spaces (Section \ref{sec:PersDiagInfMS}) and also in the more classical case where the measure is supported in $\R^d$
(Section \ref{sec:Rd}). Using Le Cam's lemma, we also compute the corresponding lower bounds to check that the rates of convergence are optimal in the
minimax sense.

\paragraph{Related works.}
Although it is attracting more and more interest, the use of persistent homology in data analysis remains widely heuristic. There are relatively few
papers establishing connections between persistence and statistics and, despite a few promising results, the statistical analysis of homology,
persistent homology and more general topological and geometric features of data is still in its infancy. 

One of the first statistical results about persistent homology has been given in a parametric setting, by Bubenik  and Kim in
\cite{bubenik2007statistical}. They show for instance that for data sampled on an hypersphere according to a von-Mise Fisher distribution (among
other distributions), the Betti numbers can be estimated with the parametric rate $n ^{-1/2}$. However assuming that both the support and the
parametric family of the distribution are known are strong assumptions which are hardly met in practice. 

Closely related to our approach, statistical analysis of homology and of persistent homology has also been proposed very recently in  \cite{balakrishnan12minimax} and \cite{balakrishnan2013statistical} in the specific context of manifolds, i.e. when the geometric structure underlying the data is assumed to be a smooth submanifold of a Euclidean space. In the first paper, the authors exhibit minimax rates of convergence for the estimation of the Betti numbers of the underlying manifold under different models of noise. This approach is also strongly connected to manifold estimation results obtained in \cite{GenoveseEtAl2012}.
Our results are in the same spirit as \cite{balakrishnan12minimax} but extend to persistent homology and allow to deal with general compact metric
spaces. In the second paper, the authors develop several methods to find confidence sets for persistence diagrams using subsampling methods and kernel
estimators among other approaches. Although they tackle a different problem, it has some connections with the problem considered in the present paper
that we briefly mention in Section \ref{sec:confidence-set}. 

Both \cite{balakrishnan2013statistical} and our work start from the observation that persistence diagram inference is strongly connected to the better
known problem of support estimation. As far as we know, only few results about support estimation in  general metric spaces have been given in
the past. 
An interesting framework is proposed in \cite{Lesets}: in this paper the support estimation problem is tackled using kernel methods. On the other hand, a large
amount of literature is available for measure support estimation in $\R^d$; see for instance the review in \cite{Cuevas09} for more details. Note
that many results on this topic are given with respect to the volume of symmetric set difference - see for instance \cite{BiauCadreMasonPelletier09} and
references therein - while in our topological estimation setting we need convergence results for support estimation in Hausdorff metric.

The estimator $\widehat \X _n = \{X_1,\dots X_n\}$ and the Devroye and Wise estimator \cite{DevroyeWise80}, $ \hat S_n  = \bigcup_{i = 1} ^n \bar B (X_i,\varepsilon_n) $, where $\bar B(x,\varepsilon)$ denotes the closed ball centered at $x$ with radius $\varepsilon$, are both natural estimators of the support. The use of $\hat S_n$ is particularly relevant when the convergence of the measure of the symmetric set difference is considered but does not provide better results than $\widehat \X _n$ in our Hausdorff distance setting. 
The convergence rate of $\widehat \X _n$ to the support of the measure with respect to the Hausdorff distance is given in \cite{CuevasRCasal04} in $\R^d$. 
Support estimation in $\R^d$ has also been studied under various additional assumptions such as, e.g., convexity assumptions 
\cite{DumbgenWalther96,RodriguezCasal2007,CuevasEtal12} or through boundary fragments estimation \cite{KorostelevTsybakov93,
KorostelevSimarTsybakov95} just to name a few. 
Another classical assumption is that the measure has a density with respect to the Lebesgue measure. In this context, plug-in methods based on non
parametric estimators of the density have been proposed by \cite{CuevasFraiman97} and \cite{Tsybakov97}. We consider persistence diagram estimation in
the density framework of
\cite{SinghScottNowak09} in Section \ref{sec:Rd} and show in this particular context that $\widehat \X _n$ allows us to define a persistence
diagram estimator that reaches optimal rates of convergence in the minimax sense. 

A few other different methods have also been proposed for topology estimation in non deterministic frameworks such as the ones based upon
deconvolution approaches \cite{ccdm-dwmgi-11,Niyogi-Smale-Weinberger11}. Several recent attempts have also been made, with completely different
approaches, to study statistical persistence diagrams from a statistical point of view, such as \cite{mmj-pmspd-11} that studies probability measures
on the space of persistence diagrams or \cite{Bubenik12} that introduces a functional representation of persistence diagrams, the so-called
persistence landscapes, allowing to define means and variance of persistence diagrams. Notice that our results should easily extend to persistence
landscapes. 

\vskip15pt
The paper is organized as follows. Background notions and results on metric spaces, filtered simplicial complexes, and persistent homology  that are
necessary to follow the paper are presented in Section \ref{sec:preliminaries}. The rates of convergence for the estimation of persistence diagrams in
general metric spaces are established in Section \ref{sec:PersDiagInfMS}. We also study these convergence rates in $\R^d$ for a few classical problems
in Section \ref{sec:Rd}. Some numerical experiments illustrating our results are given in Section \ref{sec:PersDiagLearn}. All the technical proofs
are given in Appendix.

\section{Background}
\label{sec:preliminaries}


\subsection{Measured metric spaces}

Recall that a metric space is a pair $( \bigM,\rho)$ where $\bigM$ is a set and $\rho : \bigM \times \bigM \to \R$ is a nonnegative map such that for any $x,y,z \in \bigM$, $\rho(x,y) = 0$ if and only if $x = y$, $\rho(x,y) = \rho(y,x)$ and $\rho(x,z) \leq \rho(x,y)+ \rho(y,z)$. 
We denote by $\mathcal K(\bigM)$ the set of all the compact subsets of $\bigM$.
For a point $x\in\bigM$ and a subset $C\in \mathcal K(\bigM)$, the distance $d(x,C)$ of $x$ to $C$ is the minimum over all $y\in C$ of $d(x,y)$.
The Hausdorff distance $\dhaus(C_1,C_2)$ between two subsets $C_1, C_2 \in \mathcal K(\bigM)$ is the maximum over all points in $C_1$ of their distance to $C_2$ and over all points in $C_2$ of their distance to $C_1$ :
$$\dhaus(C_1,C_2) = \max\{\,\sup_{x \in C_1} d(x,C_2),\, \sup_{y \in C_2} d(y,C_1)\,\}  .  $$
Note that $(\mathcal K(\bigM), \dhaus)$ is a metric space and can be endowed with its Borel $\sigma$-algebra.

Two compact metric spaces $( \bigM_1,\rho_1)$ and $( \bigM_2,\rho_2)$ are {\em isometric} if there exists a bijection $\Phi : \bigM_1 \to \bigM_2$ that preserves distances, namely: $\forall x,y \in \bigM_1$, $\rho_2(\Phi(x),\Phi(y)) = \rho_1(x,y)$. Such a map $\Phi$ is called an {\em isometry}.  One way to compare two metric spaces is to measure how far these two metric spaces are from being isometric. The corresponding distance is called the {\em Gromov-Hausdorff distance} (see for instance \cite{burago2001course}). Intuitively, it is the infimum of their Hausdorff distance over all possible isometric embeddings of these two spaces into a common metric space.
\begin{definition} Let  $( \bigM_1,\rho_1)$ and $( \bigM_2,\rho_2)$ be two compact metric spaces. The Gromov-Hausdorff distance $\dgh\left( ( \bigM_1,\rho_1) \, , \, ( \bigM_2,\rho_2) \right)$ is the infimum of the real numbers $r \geq 0$ such that there exist a metric space $( \bigM,\rho)$ and subspaces $C_1$ and $C_2$ in $\mathcal K(\bigM)$ which are isometric to
$\bigM_1 $  and $\bigM_2 $ respectively and such that $\dhaus(C_1,C_2) < r$.
The Gromov-Hausdorff distance $\dgh$ defines a metric on the space $\mathcal K$ of isometry classes of compact metric spaces (see Theorem 7.3.30 in \cite{burago2001course}). 
\end{definition}
Notice that when $\bigM_1$ and $\bigM_2$ are subspaces of a same metric space $(\bigM, \rho)$ then $\dgh(\bigM_1,\bigM_2) \leq \dhaus(\bigM_1,\bigM_2)$.



\paragraph{Measure.}
Let $\mu$ be a probability measure on $(\bigM,\rho)$ equipped with its Borel algebra.  Let $\X_\mu$
denote the support of the measure $\mu$, namely the smallest closed set with probability one. In the following of the paper, we will assume that
$\X_\mu$ is compact and thus $\X_\mu \in \mathcal K (\bigM)$. Also note that $(\X_\mu, \rho)  \in \mathcal{K}$.

The main assumption we will need in the following of the paper provides a lower bound on the measure $\mu$. We say that $\mu$ satisfies the {\it standard assumption} if there exist $a'>0$, $r_0>0$ and $b >0$  such that 
\begin{equation} \label{ref:SdtAssump1}
\forall x \in \X_\mu, \ \forall r \in (0,r_0), \  \mu(B(x,r)) \geq a' r^b 
\end{equation}
where $B(x,r)$ denotes the open ball of center $x$ and radius $r$ in $\bigM$. 
This assumption is popular in the literature about set estimation (see for instance \cite{Cuevas09}) but it has generally been considered with $b = d$  in $\R^d$. Since  $\X_\mu$ is compact,  reducing the constant $a'$ to a smaller constant $a$ if necessary, we easily check that assumption (\ref{ref:SdtAssump1}) is equivalent to 
\begin{equation} \label{ref:SdtAssump2}
\forall x \in \X_\mu, \ \forall r>0 , \  \mu(B(x,r)) \geq 1\wedge a r^b
\end{equation}
where $x \wedge y $ denotes the minimum between  $x$ and $y$. We then say that $\mu$ satisfies the $(a,b)$-{\it standard assumption}.

\subsection{Simplicial complexes on metric spaces}
\label{subsec:Simplcomp}

\begin{figure}
\centering
\includegraphics[height=3cm]{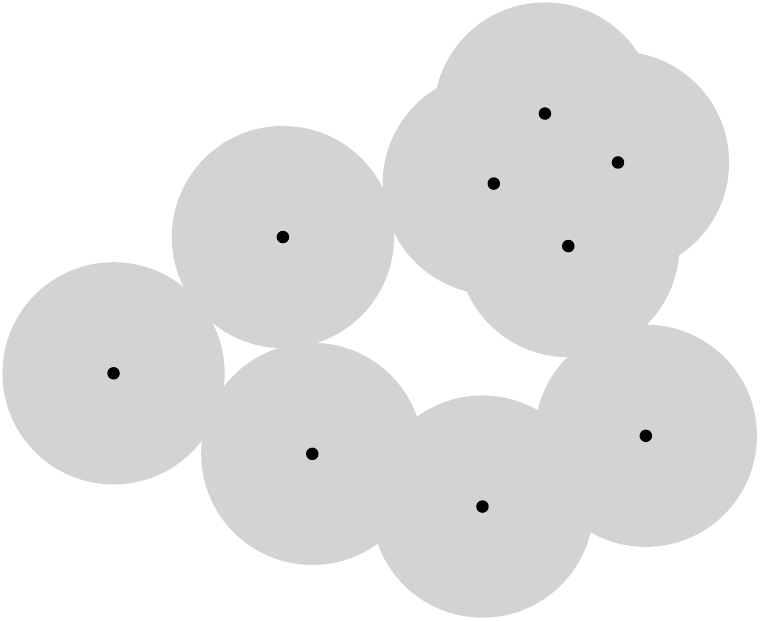}
\includegraphics[height=3cm]{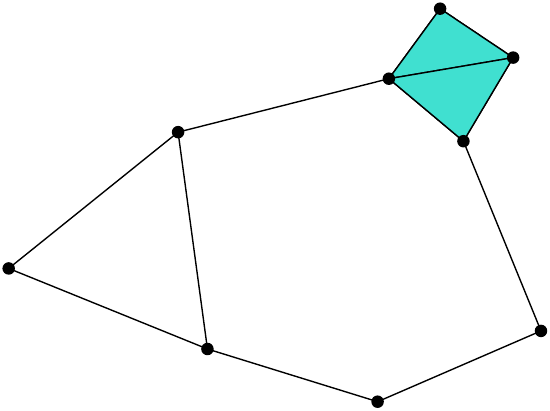}
\includegraphics[height=3cm]{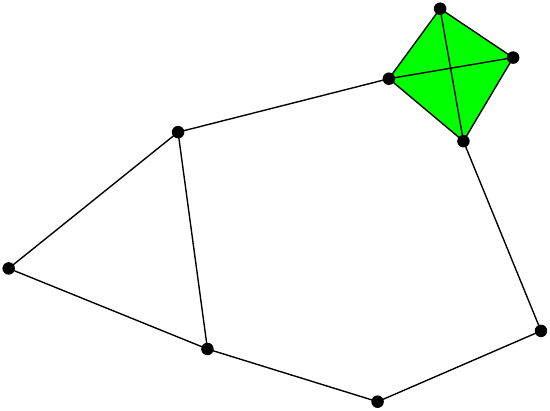}
\includegraphics[height=3cm]{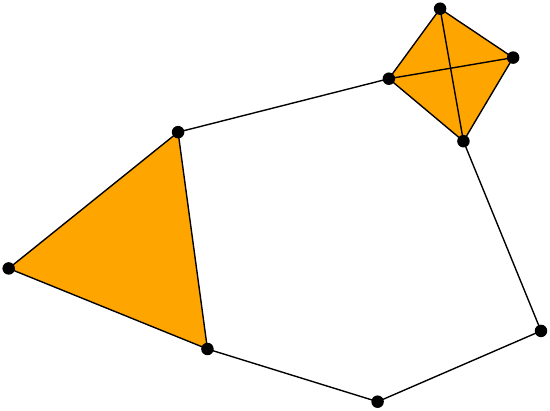}
\caption{From left to right: the $\alpha$ sublevelset of the distance function to a point
  set $\X$ in $\R^2$, the $\alpha$-complex, $\Cech_\alpha(\X)$ and $\Rips_{2\alpha}(\X)$.
The last two include a tetrahedron.}
\label{fig:complex}
\end{figure}
\paragraph{Geometric complexes.}
The geometric complexes we consider in this paper are built on top of metric spaces and come as nested families depending on a real parameter. Topological persistence is used to infer and encode the evolution of the topology of theses families as the parameter grows. 
For a complete definition of these geometric filtered complexes built on top of metric spaces and their use in TDA, we refer
to~\cite{cso-psgc-12}, Section 4.2. We only give here a brief reminder and
refer to Figure~\ref{fig:complex} for illustrations. A simplicial complex $\K$
is a set of simplexes (points, segments, triangles, etc) such that any face
from a simplex in $\K$ is also in $\K$ and the intersection of any two
simplices of $\K$ is a (possibly empty) face of these simplices. Notice that we do not assume such simplicial complexes to be finite. 
The complexes
we consider in this paper can be seen as a generalization of neighborhood graphs in dimension larger than $1$.

Given a metric space $\X$ which will also serve as the vertex set, the
\emph{Vietoris-Rips complex} $\Rips_\alpha(\X)$ is the set of simplices
$[x_0,\ldots,x_k]$ such that $d_\X(x_i,x_j)\leq\alpha$ for all $(i,j)$.
The \emph{\v Cech complex} $\Cech_\alpha(\X)$ is similarly defined as
the set of simplices $[x_0,\ldots,x_k]$ such that the $k+1$ closed balls
$B(x_i,\alpha)$ have a non-empty intersection. 
Note that these two complexes are related by
$\Rips_\alpha(\X)\subseteq\Cech_\alpha(\X)\subseteq\Rips_{2\alpha}(\X)$. Note also that these two families of complexes only depend on the pairwise distances between the points of $\X$.


When $\X$ is embedded in some larger metric space $\bigM$, we can extend
the definition of the \v Cech complex to
the set of simplices $[x_0,\ldots,x_k]$ such that the $k+1$ closed balls
$B(x_i,\alpha)$ have a non-empty intersection in $\bigM$ (not just in
$\X$). We can also define the \emph{alpha-complex} or
\emph{$\alpha$-complex} as
the set of simplices $[x_0,\ldots,x_k]$ such that, for some
$\beta\leq\alpha$ that depends on the simplex, the $k+1$ closed balls
$B(x_i,\beta)$ and the complement
of all the other balls $B(x,\beta)$ for $x\in\X$
have a non-empty intersection in $\bigM$.
In the particular case where
$\bigM=\R^d$, those two complexes have the same homotopy type (they are
equivalent for our purposes) as the union of the balls
$B(x,\alpha)$ for $x\in\X$,
as in Figure~\ref{fig:complex}, and the $\alpha$-complex only contains simplices
of dimension at most $d$.
Note that the union of the balls $B(x,\alpha)$ is also the $\alpha$-sublevel set of the distance to $\X$ function $d(.,\X)$, and as a consequence, those filtrations thus provide a convenient way to study the evolution of the topology of union of growing balls or sublevel sets of $d(.,\X)$ (see Figure \ref{fig:complex} and Section \ref{sec:PersDiagLearn} for more examples).

There are several other families that we could also have considered, most
notably witness complexes \cite{cso-psgc-12}. Extending our results to them is
straightforward and yields very similar results, so we will restrict to the
families defined above in the rest of the paper.

All these families of complexes
have the fundamental property that they are non-decreasing with
$\alpha$; for any $\alpha\leq\beta$, there is an inclusion of
$\Rips_\alpha(\X)$ in $\Rips_\beta(\X)$, and similarly for the \v Cech,
and Alpha complexes. They are thus called \emph{filtrations}.
%
%
In the following, the notation $\Filt(\X) := (\Filt_\alpha (\X))_{\alpha
\in \mathcal A}$  denotes one of the filtrations defined above.


\paragraph{Persistence diagrams.} An extensive presentation of
persistence diagrams is available in \cite{chazal2012structure}. We recall a few
definitions and results that are needed in this paper.

\begin{figure}
\centering\includegraphics[height=6cm]{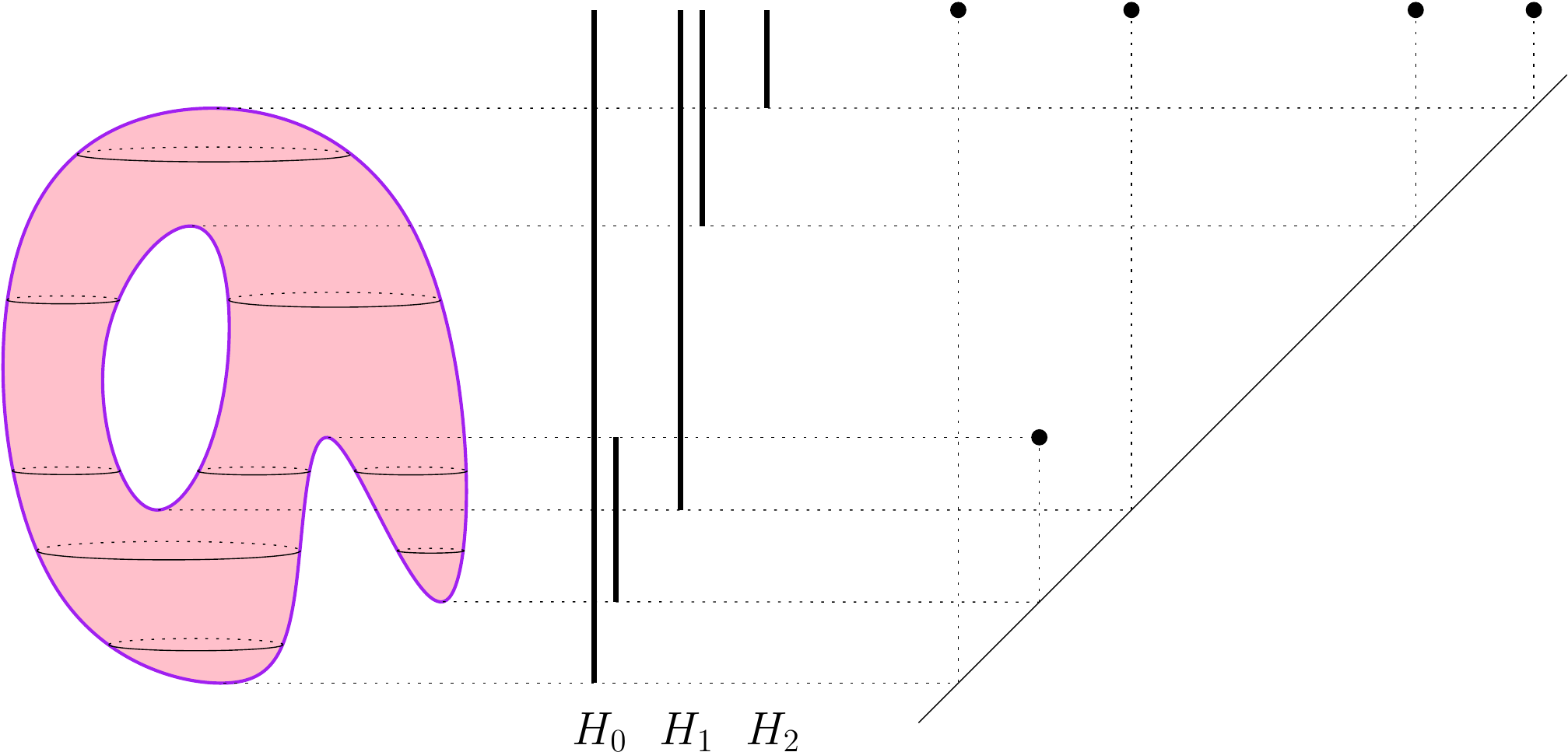}
\caption{A torus $\mathbb{T}$ filtered by its $z$-coordinate:
$\Filt_\alpha=\{P\in\mathbb{T}|P_z\leq\alpha\}$, its persistence
barcode, and its persistence diagram.}
\label{fig:tore}
\end{figure}

\begin{figure}
\centering\includegraphics[height=5cm]{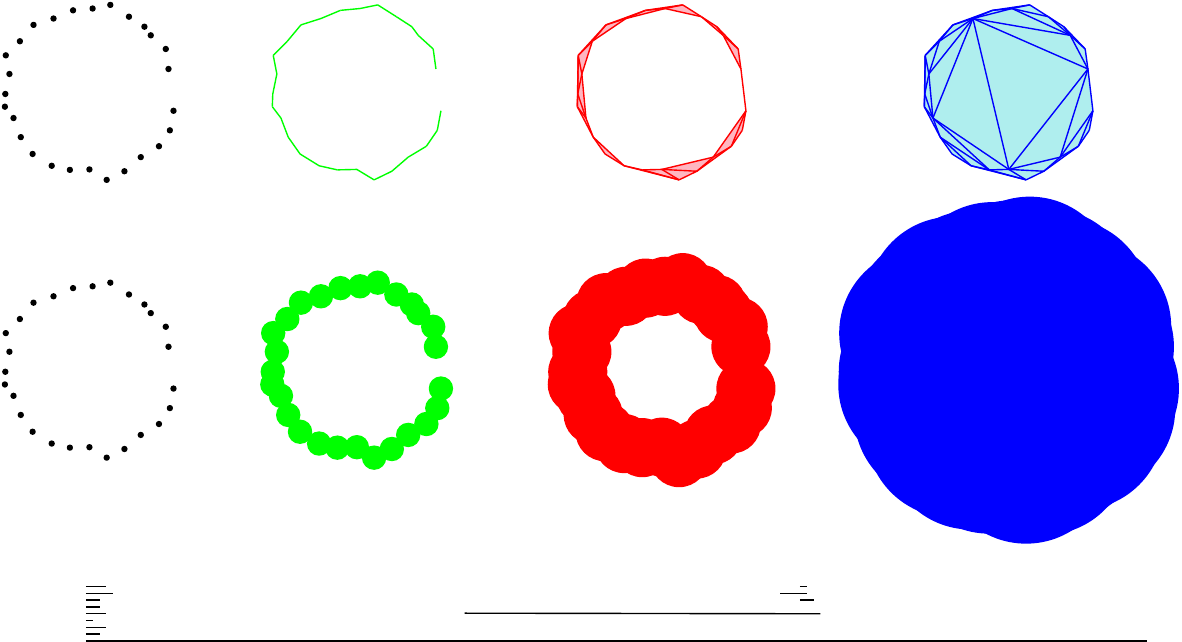}
\caption{An $\alpha$-complex filtration, the sublevelset filtration of
the distance function, and their common persistence barcode (they are
homotopy equivalent).}
\label{fig:alpha-persist}
\end{figure}

We first give the intuition behind persistence.
Given a filtration as above, the topology of $\Filt_\alpha (\X)$ changes as $\alpha$ increases: new connected components can appear, existing connected components can merge, cycles and cavities can appear and can be filled, etc.
Persistent homology is a tool that tracks these changes, identifies \emph{features} and associates a \emph{lifetime} to them. For instance, a connected component is a feature that is born at the smallest $\alpha$ such that the component is present in $\Filt_\alpha (\X)$, and dies when it merges with an older connected component. Intuitively, the longer a feature persists, the more relevant it is.

We now formalize the presentation a bit.
Given a filtration as above, we can apply the $\Z_2$-homology functor 
\footnote{The notion of (simplicial) homology is a classical concept in algebraic topology that provides powerful tools to formalize and handle the notion of topological features of a simplicial complex in an algebraic way. For example the $0$-dimensional homology group $H_0$ represents the $0$-dimensional features, i.e. the connected components of the complex, $H_1$ represents the $1$-dimensional features (cycles), $H_2$ represents the $2$-dimensional features (cavities),... See, e.g. \cite{h-at-01} for an introduction to simplicial homology.}
and get a sequence of vector spaces $(H(\Filt_\alpha (\X)))_{\alpha \in
\mathcal A}$, where the inclusions $\Filt_\alpha (\X)\subseteq
\Filt_\beta (\X)$ induce linear maps $H(\Filt_\alpha (\X))\rightarrow
H(\Filt_\beta (\X))$. In many cases, this sequence can be decomposed as
a direct sum of intervals, where an interval is a sequence of the form
\[0\rightarrow \ldots\rightarrow 0\rightarrow \Z_2\rightarrow
\ldots\rightarrow \Z_2\rightarrow 0\rightarrow \ldots\rightarrow 0\]
(the linear maps $\Z_2\rightarrow\Z_2$ are all the identity). These
intervals can be interpreted as features of the (filtered) complex, such
as a connected component or a loop, that appear at parameter
$\alpha_\textrm{birth}$ in the filtration and disappear at parameter
$\alpha_\textrm{death}$. An interval is determined uniquely by these
two parameters. It can be represented as a segment whose extremities
have abscissae $\alpha_\textrm{birth}$ and $\alpha_\textrm{death}$; the
set of these segments is called the barcode of $\Filt(\X)$. An interval
can also be represented as a point in the plane,
where the $x$-coordinate indicates the birth time and the $y$-coordinate
the death time. The set of points (with multiplicity) representing the
intervals is called the persistence diagram $\dgm(\Filt(\X))$. Note that
the diagram is entirely contained in the half-plane above the diagonal
$\Delta$ defined by $y=x$, since death always occurs after birth.
\cite{chazal2012structure} shows that this diagram is still well defined even in
cases where the sequence might not be decomposable as a finite sum of
intervals, and in particular $\dgm(\Filt(\X))$ is well defined for any
compact metric space $\X$ \cite{cso-psgc-12}.
Note that for technical reasons, the points
of the diagonal $\Delta$ are considered as part of every persistence
diagram, with infinite multiplicity.
The most persistent features (supposedly the most important) are those represented by the longest bars in the barcode, i.e. the points furthest from the diagonal in the diagram, whereas points close to the diagonal can be interpreted as noise.

\begin{figure}
\centering\includegraphics[height=5cm]{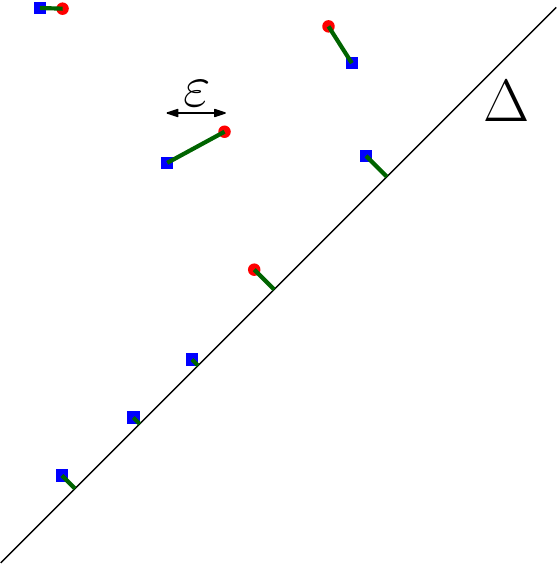}
\caption{Two diagrams at bottleneck distance $\varepsilon$.}
\label{fig:bottleneck}
\end{figure}

The space of persistence diagrams is endowed with a metric called the
\emph{bottleneck distance} $\bottle$. Given two persistence diagrams, it is
defined as the infimum, over all perfect matchings of their points, of the
largest $L^\infty$-distance between two matched points, see Figure \ref{fig:bottleneck}.
The presence of the diagonal in all diagrams means we can consider partial matchings of the off-diagonal points, and the remaining points are matched to the diagonal.
With more details, given two diagrams $\dgm_1$ and $\dgm_2$, we can define a
matching $m$ as a subset of
$\dgm_1\times\dgm_2$ such that every point of $\dgm_1\setminus\Delta$ and $\dgm_2\setminus\Delta$ appears exactly once in $m$. The bottleneck distance is then:
\[\bottle(\dgm_1,\dgm_2)=\inf_{\textrm{matching }m}\ \max_{(p,q)\in m}\ ||q-p||_\infty.\]
Note that points close to the diagonal $\Delta$ are easily matched to the diagonal, which fits with their interpretation as irrelevant noise.


A fundamental property of persistence diagrams, proved in
\cite{chazal2012structure}, is their \emph{stability}. If $\X$ and $\tilde \X$
are two compact metric spaces then one has
\begin{equation} \label{bot-dgh}
\bottle\left( 
\dgm (\Filt(\X)), 
\dgm (\Filt(\tilde \X))
\right)
\leq
2 \dgh \left(\X, \tilde \X\right).
\end{equation}
Moreover, if $\X$ and $\tilde \X$ are embedded in the same metric space
$(\bigM,\rho)$ then one has
\begin{equation} \label{bot-dgh_dhaus}
\bottle\left( 
\dgm (\Filt(\X)), 
\dgm (\Filt(\tilde \X))
\right)
\leq
2 \dgh \left(\X, \tilde \X\right)
\leq  2 \dhaus \left(\X, \tilde \X\right).
\end{equation}

Notice that these properties are only metric properties: they do not involve here any probability measure on $\X$ and $\tilde \X$.

\section{Persistence diagrams estimation in metric spaces}
\label{sec:PersDiagInfMS}
Let $( \bigM,\rho)$ be a metric space. Assume that we observe $n$
points $X_1 \dots, X_n$ in $\bigM$ drawn i.i.d. from some unknown measure $\mu$ whose support is a compact set denoted $\X_\mu$.

\subsection{From support estimation to persistence diagram estimation} 
\label{subsec:supest2PersEst}

The Gromov-Hausdorff distance allows to compare $\X_\mu$ with compact metric spaces 
not necessarily embedded in  $\bigM$. We thus consider $(\X_\mu,\rho) $ as an element of $\mathcal K$ (rather than an element of $\mathcal K (\bigM)$). In the
following, an \emph{estimator} $\hX$ of $\X_\mu$ is thus a function of $X_1 \dots, X_n$ which takes its values in $\mathcal K$ and which is measurable for the Borel algebra induced by $\dgh$.

Let $\Filt(\X_\mu)$ and $\Filt(\hX)$ be two filtrations defined on $\X_\mu$ and $\hX$.  The statistical analysis of persistence diagrams proposed in
the sequel starts from the following key fact: according to
(\ref{bot-dgh}), for any $\varepsilon >0$:
\begin{equation} \label{eq:dgmdgh}
 \P \left( 
  \bottle\left( 
\dgm (\Filt(\X_\mu)), \dgm (\Filt(\hX))
\right) > \varepsilon 
\right) 
\leq  \P \left( \dgh(  \X_\mu, \hX ) >  2 \varepsilon \right)  
\end{equation}
where the probability  corresponds to the  product measure $\mu^{\otimes n}$. Our strategy then consists in finding an estimator of the support which is close
for the $\dgh$ distance. Note that this general strategy of estimating $\X_\mu$ in $\mathcal K$ is not only of theoretical interest. Indeed as mentioned in the introduction, in some cases the space $\bigM$ is unknown and the observations $X_1 \dots, X_n$ are just known through their matrix of pairwise distances $\rho(X_i,X_j)$, $i,j = 1, \cdots, n$. The use of the Gromov-Hausdorff distance then allows to consider this set of observations as an abstract metric space of cardinality $n$ without taking care of the way it is embedded in $\bigM$.  

This general framework embraces the more standard approach consisting in estimating the support by restraining the values of  $\hX$ to
$\mathcal K (\bigM)$. According to (\ref{bot-dgh_dhaus}), in this case, for any $\varepsilon >0$:
\begin{equation} \label{eq:dbdh}
 \P \left(  
 \bottle \left( 
\dgm (\Filt(\X_\mu)), 
\dgm (\Filt(\hX))
\right) > \varepsilon \right)
 \leq  
 \P \left( \dhaus(  \X_\mu, \hX ) >  2 \varepsilon \right).
\end{equation}
Thanks to equations (\ref{eq:dgmdgh}) and (\ref{eq:dbdh}) the problem of persistence diagrams estimation boils down to the better known problem of estimating the support of a measure.


Let $\widehat \X _n := \{ X_1, \dots ,X_n\}$ be a set of independent observations sampled according to $\mu$ endowed with the restriction of the distance $\rho$. This finite metric space is a natural estimator of the support $\X_\mu$.
In several contexts discussed in the following,
$\widehat \X _n$ shows optimal rates of convergence for the estimation of $\X_\mu$ with respect to the Hausdorff and Gromov-Hausdorff distance. From (\ref{eq:dbdh}) we will then obtain upper bounds on the rate of convergence of $\Filt(\hX_n)$, and we will need to find the corresponding lower bounds to prove the optimality for topological inference issue.

In the next subsection, we tackle persistence diagram estimation in the general framework of abstract metric spaces. We will consider more particular contexts later in the paper.

\subsection{Convergence of persistence diagrams} 
Cuevas and Rodr\'iguez-Casal give in \cite{CuevasRCasal04}
the rate of convergence in Hausdorff distance of  $\widehat \X _n$  for some probability measure $\mu$ satisfying an $(a,d)$-standard assumption on
$\R^d$. In this section, we consider the more general  context where  $\mu$ is a  probability measure satisfying an $(a,b)$-standard assumption on a
metric space $(\bigM,\rho)$, with $b>0$. We give below the rate of convergence of $\widehat \X _n $ in this context. The
proof follows the lines of the proof of Theorem 3 in \cite{CuevasRCasal04}.

\begin{theorem} \label{theo:ConvHaus} Assume that a probability  measure $\mu$  on $\bigM$ satisfies the  $(a,b)$-standard assumption. Then,
for any $\varepsilon >0$:
$$ \P \left( \dhaus(  \X_\mu, \hX_n ) >  2  \varepsilon \right) 
 \leq \frac{2^b}{a \e^b} \exp(-na \e^b) \wedge 1 . $$
Moreover, there exist two constants $C_1$ and $C_2$ only depending on $a$ and $b$ such that 
$$ \limsup_{n \rightarrow \infty} \; \left( \frac n {\log n}  \right) ^{1/b}  \dhaus(  \X_\mu, \hX_n )  \leq C_1  \hskip 0.5cm \textrm{ almost
surely},$$
and 
$$  \lim_{n \rightarrow \infty} \; \P \left(   \dhaus(  \X_\mu, \hX_n ) \leq C_2 \left( \frac  {\log n} n \right) ^{1/b} \right)  = 1  . $$
\end{theorem}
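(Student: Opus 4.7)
The starting point is that $\hX_n \subseteq \X_\mu$ almost surely, so the Hausdorff distance collapses to the one-sided supremum $\dhaus(\X_\mu,\hX_n) = \sup_{x\in \X_\mu} d(x,\hX_n)$. To control this supremum I would discretise $\X_\mu$ via a maximal $\varepsilon$-separated family $\{y_1,\dots,y_N\}\subseteq \X_\mu$: by maximality the balls $B(y_i,\varepsilon)$ cover $\X_\mu$, and by construction the smaller balls $B(y_i,\varepsilon/2)$ are pairwise disjoint. If every $B(y_i,\varepsilon)$ meets the sample, then any $x\in\X_\mu$ lies within $2\varepsilon$ of some $X_j$, and $\dhaus(\X_\mu,\hX_n)\leq 2\varepsilon$. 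Hence the task reduces to bounding the probability that at least one of the $N$ balls is sample-free.

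The $(a,b)$-standard assumption applied at the $y_i$'s gives $\mu(B(y_i,\varepsilon/2))\geq 1\wedge a(\varepsilon/2)^b$; since these balls are disjoint inside a probability space, summing yields $N\leq 2^b/(a\varepsilon^b)$ in the nontrivial regime $a\varepsilon^b<1$. Using the assumption a second time, $\mu(B(y_i,\varepsilon))\geq 1\wedge a\varepsilon^b$, so by a union bound together with $(1-u)^n\leq e^{-nu}$,
\[
\P\bigl(\dhaus(\X_\mu,\hX_n)>2\varepsilon\bigr)\ \leq\ \sum_{i=1}^{N}\bigl(1-\mu(B(y_i,\varepsilon))\bigr)^n\ \leq\ N\,e^{-na\varepsilon^b}\ \leq\ \frac{2^b}{a\varepsilon^b}\,e^{-na\varepsilon^b}.
\]
The bound is trivially at most $1$, so combining these cases yields the first assertion of the theorem. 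The $a\varepsilon^b\geq 1$ regime is absorbed by the $\wedge 1$ clause, which is also why I do not need to worry about the crossover.

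For the asymptotic statements I would substitute $\varepsilon_n = C(\log n/n)^{1/b}$ into the tail bound, obtaining $\P(\dhaus(\X_\mu,\hX_n)>2\varepsilon_n) \leq \frac{2^b}{aC^b \log n}\, n^{-aC^b}$. Choosing $C=C_1$ large enough that $aC_1^b>1$ makes this summable in $n$, and Borel--Cantelli delivers $\limsup_n (n/\log n)^{1/b}\dhaus(\X_\mu,\hX_n)\leq 2C_1$ almost surely. Convergence in probability at rate $(\log n/n)^{1/b}$ with constant $C_2$ follows from the same bound as soon as $C_2$ is taken with $aC_2^b>0$ (in fact any $C_2>0$ large enough that the prefactor decays works). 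The only non-cosmetic step is the packing estimate $N\leq 2^b/(a\varepsilon^b)$, which relies crucially on centering the net inside $\X_\mu$ so that the standard assumption is available at each $y_i$; everything else is bookkeeping of the Borel--Cantelli type, in line with the proof of Theorem~3 in~\cite{CuevasRCasal04}.
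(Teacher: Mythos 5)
Your argument is correct and follows essentially the same route as the paper: the bound $N\leq 2^b/(a\varepsilon^b)$ on the number of net points, obtained from disjoint half-radius balls and the $(a,b)$-standard assumption, is exactly Lemma~\ref{lemma:packing-bound}, and the union bound plus Borel--Cantelli is the Cuevas--Rodr\'iguez-Casal scheme that the paper invokes for the rest. One small slip: with $\varepsilon_n=C(\log n/n)^{1/b}$ the prefactor is $2^b/(a\varepsilon_n^b)=2^b n/(aC^b\log n)$, so the tail bound is $\frac{2^b}{aC^b\log n}\,n^{1-aC^b}$ rather than $\frac{2^b}{aC^b\log n}\,n^{-aC^b}$, and summability requires $aC_1^b>2$ rather than $>1$ --- harmless, since you take $C_1$ large enough anyway.
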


Since $\dgh (  \X_\mu, \hX_n ) \leq \dhaus(  \X_\mu, \hX_n )$ the above theorem also holds when the Gromov distance is replaced by the Gromov-Hausdorff distance. In practice this allows to consider $\hX_n$ as an abstract metric space without taking care of the way it is embedded in the, possibly unknown, metric space $\bigM$. 

Using (\ref{eq:dgmdgh}) and (\ref{bot-dgh_dhaus}), we then derive from the previous result the following corollary for the convergence rate of the persistence diagram $\Filt(\widehat \X _n)$ toward $\Filt(\X_\mu)$.

\begin{corollary} \label{cor:UpperRatesAbstract}
Assume that the probability  measure $\mu$  on $\bigM$ satisfies the  $(a,b)$-standard assumption,  then for any $\varepsilon >0$:
\begin{equation} \label{ubP} 
 \P \left(  
 \bottle \left( 
\dgm (\Filt(\X_\mu)), 
\dgm (\Filt(\widehat \X _n))
\right) > \e  \right)
 \leq \frac{2^b}{a \e^b} \exp(-na \e^b) \wedge 1 .
 \end{equation} 
 Moreover, 
 $$ \limsup_{n \rightarrow \infty} \; \left( \frac n {\log n}  \right) ^{1/b} \bottle \left( 
\dgm (\Filt(\X_\mu)), 
\dgm (\Filt(\widehat \X _n))
\right)  \leq C_1  \hskip 0.5cm \textrm{ almost
surely},$$
and 
$$  \lim_{n \rightarrow \infty} \; \P \left(  \bottle \left( 
\dgm (\Filt(\X_\mu)), 
\dgm (\Filt(\widehat \X _n))
\right) \leq C_2 \left( \frac  {\log n} n \right) ^{1/b} \right)  = 1  . $$
where  $C_1$ and $C_2$ are the same constants as in Theorem \ref{theo:ConvHaus}.
\end{corollary}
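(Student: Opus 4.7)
The plan is to derive both the tail bound and the two limit statements as direct transfers of the conclusions of Theorem \ref{theo:ConvHaus} through the deterministic inequality $\bottle\bigl(\dgm(\Filt(\X)), \dgm(\Filt(\tilde \X))\bigr) \leq 2\,\dhaus(\X, \tilde \X)$ when $\X,\tilde \X$ are in the same ambient $\bigM$, together with its probabilistic consequence (\ref{eq:dbdh}). No new probabilistic input is needed: the corollary is essentially a pushforward under the stability map.

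For inequality (\ref{ubP}), I would simply combine (\ref{eq:dbdh}) with the tail bound of Theorem \ref{theo:ConvHaus}: chaining
\[
\P\!\left( \bottle\bigl(\dgm(\Filt(\X_\mu)), \dgm(\Filt(\widehat \X_n))\bigr) > \e \right) \leq \P\!\left( \dhaus(\X_\mu, \widehat \X_n) > 2\e \right) \leq \frac{2^b}{a\e^b} \exp(-na\e^b) \wedge 1
\]
gives the result, where the final bound is read off verbatim from Theorem \ref{theo:ConvHaus}.

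For the almost-sure statement, I would argue pointwise on the underlying sample space: since the inclusion $\Filt(\widehat \X_n) \hookrightarrow \Filt(\X_\mu)$ lives in the common metric space $(\bigM, \rho)$, the stability inequality (\ref{bot-dgh_dhaus}) yields $\bottle\bigl(\dgm(\Filt(\X_\mu)), \dgm(\Filt(\widehat \X_n))\bigr) \leq 2\,\dhaus(\X_\mu, \widehat \X_n)$ deterministically, so multiplying by $(n/\log n)^{1/b}$ and taking the $\limsup$ inherits the almost-sure bound from Theorem \ref{theo:ConvHaus} (absorbing the factor $2$ into the constant $C_1$, which is already allowed to depend only on $a$ and $b$). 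Alternatively, one can invoke the first Borel--Cantelli lemma by verifying, via (\ref{ubP}), that $\sum_n \P\bigl( \bottle > C_1 (\log n / n)^{1/b} \bigr) < \infty$ for $C_1$ chosen large enough that $n a (C_1 \log n / n)^b = a C_1^b \log n$ exceeds $(1+b^{-1})\log n$ for all large $n$; the sum then converges as a $p$-series.

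For the convergence in probability, I would set $\e_n = C_2 (\log n / n)^{1/b}$ in (\ref{ubP}) and observe that $n a \e_n^b = a C_2^b \log n$, so the tail bound becomes at most $\frac{2^b}{a C_2^b \log n} \cdot n^{-a C_2^b}$, which tends to $0$ as $n \to \infty$ provided $C_2$ is chosen large enough (any $C_2$ with $a C_2^b > 0$ works, and the same $C_2$ as in Theorem \ref{theo:ConvHaus} is admissible). There is no real obstacle here — the only mild subtlety is bookkeeping the constants so that one may honestly write ``the same $C_1, C_2$ as in Theorem \ref{theo:ConvHaus}'', and this is covered by the fact that inequality (\ref{eq:dbdh}) has the same shape as the Hausdorff bound.
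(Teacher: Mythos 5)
Your proof is correct and follows the paper's route exactly: the corollary is obtained by chaining the stability inequality (\ref{eq:dbdh}) with the three conclusions of Theorem \ref{theo:ConvHaus}, so that the tail bound, the almost-sure $\limsup$, and the convergence in probability all transfer with the same constants and no new probabilistic input. One small caution about your supplementary Borel--Cantelli and direct computations: with $\e_n = C(\log n/n)^{1/b}$ the prefactor $2^b/(a\e_n^b)$ equals $2^b n/(aC^b\log n)$ --- you dropped the factor of $n$ --- so convergence to zero (resp.\ summability over $n$) requires $aC^b>1$ (resp.\ $aC^b\geq 2$), not merely $aC^b>0$; this does not affect your primary argument, which inherits the constants from Theorem \ref{theo:ConvHaus} directly.
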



\subsection{Optimal rate of convergence}

Let $\mathcal  P (a,b,\bigM)$  be the set of all the
probability measures on the metric space $(\bigM,\rho)$ satisfying the $(a,b)$-standard assumption on $\bigM$. 
 \begin{equation*} \label{Pad}
\mathcal P  (a,b,\bigM) :=  \left\{ \mu  \textrm{ on } \bigM \: | \:  \X_\mu \textrm{ is compact and } \forall x \in \X_\mu,  \, \forall r > 0 , \, \mu \left(B(x,r) \right) \geq  1 \wedge a r^b  \right\}  .
\end{equation*}
The next theorem gives upper and lower bounds for the rate of convergence of persistence diagrams. The upper bound comes as a consequence of Corollary \ref{cor:UpperRatesAbstract}, while the lower bound is established using the so-called Le Cam's lemma (see Lemma \ref{Lem:Lecam} in Appendix). 


\begin{theorem} \label{prop:lowerboundAbs}
Let $(\bigM,\rho)$ be a metric space and let $ a >0$ and $ b  >0 $. Then:
\begin{equation} \label{ubE}
 \sup_{\mu \in \mathcal P (a,b,\bigM)} \E \left[ \bottle  (   \dgm (\Filt(\X_\mu)) ,   \dgm(\Filt(\widehat \X_n)) ) \right]  \leq C  \left( \frac{ \ln n}  n  \right)^{1/b} 
\end{equation}
where the constant $C$ only depends on  $a$ and $b$ (not on $\bigM$). 
Assume moreover that there exists a non isolated point $x$ in $\bigM$ and consider any sequence $(x_n) \in \left( \bigM  \setminus \{x\} \right) ^ {\mathbb N}$
such that $ \rho(x,x_n) \leq (a n) ^{-1/b} $. Then for any estimator $\widehat{\dgm}_n$ of
$\dgm (\Filt(\X_\mu))$:
$$
\liminf _{ n \rightarrow \infty}  \rho(x,x_n) ^{-1}   \sup_{\mu \in \mathcal P (a,b,\bigM)}\,  \E \left[ \bottle  (   \dgm (\Filt(\X_\mu)) , 
\widehat{\dgm}_n ) \right] \geq C '
$$
where $C'$ is an absolute constant.
\end{theorem}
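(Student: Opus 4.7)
My plan is to derive \eqref{ubE} from the tail bound \eqref{ubP} of Corollary~\ref{cor:UpperRatesAbstract} via the identity $\E[Y]=\int_0^\infty \P(Y>\varepsilon)\,d\varepsilon$ applied to $Y_n:=\bottle(\dgm(\Filt(\X_\mu)),\dgm(\Filt(\widehat\X_n)))$. I would split the integral at $\varepsilon_n^*=C_0\bigl(\tfrac{\ln n}{n}\bigr)^{1/b}$, with $C_0=C_0(a,b)$ chosen large enough that $na(\varepsilon_n^*)^b\geq 2\ln n$. On $[0,\varepsilon_n^*]$ the trivial bound $\P(Y_n>\varepsilon)\leq 1$ contributes $\varepsilon_n^*$. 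On $[\varepsilon_n^*,\infty)$, plugging in \eqref{ubP} and changing variables $u=na\varepsilon^b$ gives an incomplete Gamma integral that is of strictly smaller order than $\varepsilon_n^*$. Since the constants in \eqref{ubP} depend only on $a$ and $b$, the resulting bound is uniform over $\mu\in\mathcal P(a,b,\bigM)$, which yields \eqref{ubE}.

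\textbf{Construction of two hypotheses for Le Cam.} For the lower bound I would apply Le Cam's lemma (Lemma~\ref{Lem:Lecam}) to the pair $\mu_0=\delta_x$ and $\mu_1=(1-\tfrac{1}{n})\delta_x+\tfrac{1}{n}\delta_{x_n}$, both of which need to lie in $\mathcal P(a,b,\bigM)$. For $\mu_0$ this is immediate. For $\mu_1$, whose support is $\{x,x_n\}$, the only nontrivial check is $\mu_1(B(x_n,r))\geq 1\wedge ar^b$ for $r\leq\rho(x,x_n)$, which reduces to $1/n\geq ar^b$; since $\rho(x,x_n)\leq(an)^{-1/b}$ gives $a\rho(x,x_n)^b\leq 1/n$, this inequality holds. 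The hypothesis on $\rho(x,x_n)$ is used precisely here.

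\textbf{Computation of bottleneck distance and total variation.} The support of $\mu_0$ is a single point, so its diagram contains only the essential class, whereas the support of $\mu_1$ is two points and the additional class is born at $0$ and dies at the scale at which the edge $[x,x_n]$ enters the filtration. Depending on whether one uses Rips, \v{C}ech or $\alpha$-complex, this scale is a fixed positive multiple $c_F$ of $\rho(x,x_n)$, and the bottleneck distance between the two diagrams equals $c_F\rho(x,x_n)/2$. For the total variation, $\mu_0^{\otimes n}$ is the Dirac mass at $(x,\dots,x)$ and $\mu_1^{\otimes n}$ assigns this configuration weight $(1-1/n)^n$, hence
\[\TV(\mu_0^{\otimes n},\mu_1^{\otimes n})=1-(1-1/n)^n\longrightarrow 1-e^{-1}.\]

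\textbf{Conclusion and main obstacle.} Applying Le Cam's lemma gives
\[\sup_{\mu\in\mathcal P(a,b,\bigM)}\E_\mu\!\bigl[\bottle(\dgm(\Filt(\X_\mu)),\widehat{\dgm}_n)\bigr]\geq \tfrac{1}{2}\cdot\tfrac{c_F}{2}\rho(x,x_n)\cdot\bigl(1-\TV(\mu_0^{\otimes n},\mu_1^{\otimes n})\bigr),\]
which, after dividing by $\rho(x,x_n)$ and taking $\liminf$, yields the absolute constant $C'=c_F/(4e)$. The key obstacle is the joint tuning of the perturbation weight $1/n$ and the separation $\rho(x,x_n)$: the perturbation must be small enough that $\mu_1^{\otimes n}$ cannot be reliably distinguished from $\mu_0^{\otimes n}$ on $n$ samples, while $\mu_1$ must still satisfy the $(a,b)$-standard assumption. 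The condition $\rho(x,x_n)\leq(an)^{-1/b}$ is exactly what makes these two competing constraints compatible, and it is why the same scale $n^{-1/b}$ appears in both the upper and the lower bound.
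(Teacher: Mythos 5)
Your proposal is correct and follows essentially the same route as the paper: the upper bound integrates the tail bound of Corollary~\ref{cor:UpperRatesAbstract} with a cut at $(\ln n/n)^{1/b}$, and the lower bound applies Le Cam's two-point method to exactly the same pair $\delta_x$ versus $(1-\tfrac1n)\delta_x+\tfrac1n\delta_{x_n}$, with the same verification of the $(a,b)$-standard assumption and the same bottleneck computation $c_F\rho(x,x_n)/2$. The only (harmless) deviation is that you invoke the product-measure form of Le Cam with $1-\TV(\mu_0^{\otimes n},\mu_1^{\otimes n})\to e^{-1}$, whereas Lemma~\ref{Lem:Lecam} as stated uses the single-sample total variation via the factor $\bigl[1-\TV(\mu_0,\mu_1)\bigr]^{2n}$; both are standard and yield an absolute constant $C'$.
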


Consequently, the estimator $ \dgm(\Filt(\widehat \X_n)) $ is minimax optimal on the space $\mathcal P  (a,b,\bigM)$ up to a logarithmic term as soon
as we can find a non-isolated point in $\bigM$ and a sequence $(x_n)$ in $\bigM$ such that $ \rho(x_n,x) \sim  (a n) ^{-1/b}$.
%
This is obviously the case for the Euclidean space $\R^d$.

\subsection{Confidence sets for persistence diagrams} 
\label{sec:confidence-set}

Corollary~\ref{cor:UpperRatesAbstract} can also be used to find confidence sets for persistence diagrams. Assume that $a$ and $b$ are known and let $\Psi : \eta \rightarrow  \exp(-\eta) / \eta$. Then for $\alpha \in (0,1)$,
\[
B_{d_b}\left( \dgm (\Filt(\X_\mu)), \left[ \frac 1 {n a} \Psi^{-1} \left(\frac  {\alpha}{n 2 ^b} \right) \right]^{1/b} \right)
\] is a confidence region for $\dgm \left(\Rips(\mu(K) \right))$ of level $1-\alpha$. Nevertheless, in practice the coefficients $a$ and $b$ can be unknown. In $\R^d$, the coefficient $b$ can be taken equal to the ambient dimension $d$ in many situations. Finding lower bounds on the coefficient $a$  is a tricky problem that is out of the scope of the paper. 
Alternative solutions have been proposed recently in \cite{balakrishnan2013statistical} and we refer the reader to this paper for more details.


\section{Persistence diagram estimation in $\boldsymbol{\R^d}$}
\label{sec:Rd}

In this section, we study the convergence rates of persistence diagram estimators for data embedded in $\R^d$. In particular we study  two situations
of interest proposed respectively in \cite{SinghScottNowak09} and \cite{GenoveseEtAl2012} in the context of measure support estimation. In the first situation the measure has a density with respect to the Lebesgue measure on $\R^d$ whose behavior is controlled near the boundary of its support. In the second case, the measure is supported on a manifold. These two frameworks are complementary and provide realistic frameworks for topological inference in $\R^d$.


\subsection{Optimal persistence diagram estimation for nonsingular measures on $\R^d$} 
\label{sub:OptSmoothBoundaries}

Paper \cite{SinghScottNowak09} is a significant breakthrough for level set estimation through density estimation. It presents a fully
data-driven procedure, in the spirit of Lepski's method, that is adaptive to unknown local density regularity
and achieves
a Hausdorff error control
that is minimax optimal
for a class of level sets with very general shapes. In particular, the assumptions of \cite{SinghScottNowak09} describe the smoothness
of the density near the boundary of the support. 

In this section, we propose to study persistence diagram inference in the framework of \cite{SinghScottNowak09} since this framework is very intuitive and natural. Nevertheless, we do not use the estimator of \cite{SinghScottNowak09}  for this task since we only consider here the support estimation problem (and not the more general level set issue as in \cite{SinghScottNowak09}). Indeed, we will see that the estimator $\hat \X_n$ has the optimal rate of convergence for estimating the support according to $\dhaus$, as well as for estimating the persistence diagram. We now recall the framework of \cite[Section 4.3]{SinghScottNowak09} corresponding to support set estimation.

Let $X_1, \dots,X_n$ be i.i.d.  observations drawn from an unknown probability measure $\mu$ having density $f$ with respect to the Lebesgue measure and defined on
a compact set $\chi\subset\R^d$. Let $\X_{f}$ denote the support of $\mu$, 
and let $G_0 := \{ x \in \chi \: : \: f(x) > 0\}$.  The boundary of a set $G$ is denoted  $\partial G$ and for any
$\varepsilon >0$, $I_\varepsilon(G) := \bigcup_{x \; | \;  B(x,\varepsilon ) \subset G} B(x,\varepsilon ) $  is the $\varepsilon$-inner of $G$.
 The two main assumptions of \cite{SinghScottNowak09}  are the following:
\begin{description}
\item $[A]$ : the density $f$ is upper bounded by $f_{\textrm{max}} >0 $ and there exist constants $\alpha$, $C_a$, $\delta_a >0$ such that for all $x \in  G_0$
with $f(x) \leq \delta_a$, 
$  f(x) \geq C_a  \, d(x, \partial G_0 )^\alpha  .$ 
\item $[B]$ :  there exist constants $\varepsilon_0 > 0$ and $C_b > 0$ such that for all $\varepsilon \leq \varepsilon_0$, $I_\varepsilon(G_0) \neq
\emptyset$ and $d(x, I_\varepsilon(G_0)  )  \leq  C_b \,  \varepsilon $ for all $x \in \partial G_0$.
\end{description}
We denote by $\mathcal F(\alpha)$ the set composed of all the densities on $\chi$ satisfying assumptions $[A]$ 
and $[B]$, for a fixed set of positive constants $C_a$, $C_b$, $\delta_a$, $\varepsilon_0$, $f_{\textrm{max}}$, $p$ and $\alpha$.

Assumption $[A]$ describes how fast the density increases in the neighborhood of the boundary of the support: the smaller $\alpha$, the easier the
support may be possible to detect. Assumption $[B]$ prevents the boundary from having arbitrarily small features (as for cusps). We refer to
\cite{SinghScottNowak09}  for more details and discussions about these two assumptions and their connections with assumptions in other works.

For persistence diagram estimation, we are interested in estimating the support $\X_{f}$ whereas the assumptions  $[A]$  and $[B]$  involve the set
$G_0$. However, as stated in Lemma~\ref{lem:dhG0front} (given in Appendix~\ref{proofSec41}), these two sets are here almost identical in the
sense that $ \dhaus(G_0, \X_{f}) = 0$. Moreover, it can be proved that under assumptions $[A]$ and $[B]$, the measure $\mu$ also satisfies the
standard assumption with $b = \alpha + d$ (see  Lemma~\ref{lem:dhG0front}).
According to  Proposition~\ref{prop:lowerboundAbs}, the estimator $\dgm(\Filt(\widehat \X_n))$ thus converges in
expectation towards $\dgm(\Filt( \X_f))$ with a rate upper bounded by $(\log n  / n) ^{1/(d+\alpha )}$. We also show that this rate is minimax over
the sets $\mathcal F (\alpha)$  by adapting the ideas of the proof  given in \cite{SinghScottNowak09} for the Hausdorff lower bound.
\begin{proposition} \label{prop:dplusalpha} 
\begin{enumerate}
\item For all $n\geq 1$,
\begin{equation*}
sup_{ f \in \mathcal F (\alpha)}  \E \left[ \bottle  (   \dgm (\Filt(\X_f)) , 
\dgm(\Filt(\widehat \X_n)) \right] \leq  C   \left( \frac n {\log n} \right) ^{-1/(d + \alpha)}
\end{equation*}
 where $C$ is a constant depending only on $C_a$, $C_b$, $\delta_a$, $\varepsilon_0$, $f_{\textrm{max}}$, $p$ and $\alpha$.
 \item There exists $c >0$ such that
\begin{equation*}
\inf_{\widehat{\dgm}_n}  \sup_{ f \in \mathcal F (\alpha)}  \E \left[ \bottle  (   \dgm (\Filt(\X_f)) , 
\widehat{\dgm}_n ) \right] \geq c   n ^{-1/(d + \alpha)}
\end{equation*}
for $n$ large enough. The infimum is taken over all possible estimators $\widehat{\dgm}_n$ of $\dgm (\Filt(\X_f))$ based on $n$ observations.
\end{enumerate}
\end{proposition}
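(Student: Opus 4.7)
For part (1), I would short-circuit to Theorem~\ref{prop:lowerboundAbs}. Lemma~\ref{lem:dhG0front} (proved in the appendix by a ball-volume computation) shows that assumptions $[A]$ and $[B]$ force $\dhaus(G_0,\X_f)=0$ and imply that $\mu$ satisfies the $(a,d+\alpha)$-standard assumption, with a constant $a$ depending only on $(C_a,C_b,\delta_a,\varepsilon_0,f_{\max},\alpha)$. Hence $\mathcal F(\alpha)\subseteq\mathcal P(a,d+\alpha,\R^d)$, and plugging $b=d+\alpha$ into the upper bound (\ref{ubE}) of Theorem~\ref{prop:lowerboundAbs} yields the claim with a constant depending only on the fixed parameters.

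For part (2), the atomic hypotheses underlying the lower bound of Theorem~\ref{prop:lowerboundAbs} do not belong to $\mathcal F(\alpha)$, so a fresh Le Cam-type argument is needed. Set $r_n:=c_0\,n^{-1/(d+\alpha)}$ for a small absolute $c_0>0$. The plan is to exhibit two densities $f_0,f_1\in\mathcal F(\alpha)$ (with the prescribed constants) satisfying
\[
\bottle\bigl(\dgm(\Filt(\X_{f_0})),\dgm(\Filt(\X_{f_1}))\bigr)\ \gtrsim\ r_n
\qquad\text{and}\qquad
\|f_0^{\otimes n}-f_1^{\otimes n}\|_{\TV}\ \le\ 1-c_2,
\]
and to apply Le Cam's lemma (Lemma~\ref{Lem:Lecam}).

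Adapting the Hausdorff construction of \cite[\S 4.3]{SinghScottNowak09} but modifying it to induce a genuine change of persistent topology, I would fix a template $\X_{f_0}$ (a large closed ball in $\R^d$; a large interval when $d=1$) with $f_0\propto d(\cdot,\partial\X_{f_0})^\alpha\wedge f_{\max}$, and set $\X_{f_1}:=\X_{f_0}\setminus\bar B(p_n,r_n)$ where $p_n$ lies at distance $\Theta(r_n)$ from $\partial\X_{f_0}$, with $f_1:=f_0\,\one_{\X_{f_1}}/\mu_0(\X_{f_1})$. On the excised region, assumption $[A]$ gives $f_0\lesssim r_n^\alpha$, so the excised mass is $\lesssim r_n^{d+\alpha}$; this yields $H^2(f_0,f_1)\lesssim r_n^{d+\alpha}$, whence by Hellinger tensorization $\|f_0^{\otimes n}-f_1^{\otimes n}\|_{\TV}\le\sqrt{n\,H^2(f_0,f_1)}\lesssim c_0^{(d+\alpha)/2}$, bounded away from $1$ for $c_0$ small. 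On the topological side, the excision introduces a new feature of scale $r_n$ in $\dgm(\Filt(\X_{f_1}))$: when $d\ge 2$, an $H_{d-1}$-class born at $0$ and dying at $\Theta(r_n)$ (by the nerve theorem applied to the $\alpha$-thickenings of $\X_{f_1}$, which fill the hole at $\alpha\approx r_n$); when $d=1$, an additional $H_0$-class obtained by excising a short open subinterval. In every case the diagram acquires a point at $L^\infty$-distance $\Theta(r_n)$ from the diagonal and from every point of $\dgm(\Filt(\X_{f_0}))$, forcing $\bottle\gtrsim r_n$, and Le Cam's lemma delivers the advertised $\Omega(n^{-1/(d+\alpha)})$ lower bound.

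The step I expect to be the main obstacle is arranging that $f_0$ and $f_1$ both lie in $\mathcal F(\alpha)$ with the \emph{same} fixed constants---in particular, preserving assumption $[B]$, which forbids arbitrarily thin features, despite the excised ball having radius $r_n\to 0$. The tension is intrinsic: to make $\|f_0-f_1\|_{\TV}$ scale like $r_n^{d+\alpha}$ rather than $r_n^d$, the hole must be placed in a region where the density is of order $r_n^\alpha$, i.e.\ close to $\partial\X_{f_0}$; but $[B]$ demands that the hole stay bounded away from the outer boundary, so that $I_\varepsilon(\X_{f_1})$ remains nonempty and $d(x,I_\varepsilon)\le C_b\varepsilon$ for all $\varepsilon\le\varepsilon_0$ with a $C_b$ independent of $r_n$. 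A secondary subtlety is that stability only provides $\bottle\le 2\dhaus$, so the required \emph{lower} bound on $\bottle$ cannot be deduced from a stability principle and must be extracted by direct geometric description of the two diagrams---this is precisely why the construction creates an actual new homology class rather than a smooth boundary perturbation, which would leave the persistence diagram essentially unchanged.
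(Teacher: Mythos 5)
Part (1) of your argument is the paper's route: Lemma~\ref{lem:dhG0front} shows that $[A]$ and $[B]$ force the $(a,d+\alpha)$-standard assumption with $a$ depending only on the class constants, and the upper bound is then read off from the general metric-space result (equivalently, from the Hausdorff rate of \cite{SinghScottNowak09} combined with stability). Nothing to add there.

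For part (2) your skeleton is the right one --- a two-point Le Cam argument where the second density is obtained from the first by excising a ball of radius $\varepsilon_n=n^{-1/(d+\alpha)}$ placed where the density is of order $\varepsilon_n^{\alpha}$, so that the excised mass (hence the total variation) is $O(\varepsilon_n^{d+\alpha})$ while the excision creates a new homology class, read off directly from the two barcodes rather than from stability, at bottleneck distance $\Theta(\varepsilon_n)$. But the concrete construction you propose --- placing the hole at distance $\Theta(\varepsilon_n)$ from the \emph{outer} boundary of the support --- is exactly where the argument breaks, and you flag the obstruction yourself without resolving it: the sliver of width $O(\varepsilon_n)$ trapped between the excised ball and $\partial\X_{f_0}$ is an arbitrarily thin feature, which is precisely what assumption $[B]$ (with $\varepsilon_0$ and $C_b$ fixed over the class) forbids, so your $f_1$ leaves $\mathcal F(\alpha)$. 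The paper dissolves the tension with a different placement: take $f_0$ radially symmetric on $\bar B(0,3r_0)$ with $f_0(x)=C_a\|x\|^{\alpha}$ near the origin, so that the density is \emph{forced to vanish at an interior point}; the origin is then itself a (degenerate) point of $\partial G_0=\partial\{f_0>0\}$, assumption $[A]$ holds there by construction, and $[B]$ is unaffected because a puncture at the center creates no thin sliver. Excising $\bar B(0,\varepsilon_n)$ at the origin then costs mass $\int_{\|x\|\le\varepsilon_n}C_a\|x\|^{\alpha}\,dx=O(\varepsilon_n^{d+\alpha})$ as required, while the new class born at $0$ and dying at scale $\Theta(\varepsilon_n)$ gives $\bottle\asymp\varepsilon_n$, and Le Cam's lemma concludes. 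Without this idea --- making the density itself vanish at the point you intend to puncture --- your two hypotheses do not lie in the same class $\mathcal F(\alpha)$ and the lower bound does not follow.
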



\begin{remark} Paper \cite{SinghScottNowak09} is more generally about adaptive level set estimation. For this problem, Singh {\it et al.} define an histogram based estimator. Let $\mathcal A_j$ denote the collection of cells, in a regular partition of $\chi =[0,1]^d$ into hypercubes of dyadic side length $2^{-j}$. Their estimator $\hat f$ is the histogram $ \hat f (A) = \hat P (A)/ \mu(A)$, where $\hat P (A) = \sum _{i=1 \dots n} 1_{X_i  \in A}$. For estimating the level set $ G_{\gamma} : = \{ x  |  f(x) \geq   \gamma \} $, they consider the estimator
 $$ \hat G_{\gamma,j} = \bigcup _{A \in \mathcal A_j \; | \;  \hat f (A) > \gamma } A  .$$
 It is proved in \cite{SinghScottNowak09} that $\hat G_{\gamma,\hat j}$ achieves optimal rates of convergence for estimating the level sets, with $\hat j$ chosen in a data driven way. Concerning support estimation, they also show that $\hat G_{0,j}$ achieves optimal rates of convergence for estimating $G_0$. We have seen that in this context it is also the case for the estimator $\X _n$. Since no knowledge of $\alpha$ is required for this last estimator,  we thus prefer to use this simpler estimator in this context.
\end{remark}

\subsection{Optimal rates of convergence of persistence diagram estimation for singular measures in $\R^D$}

In this subsection, we consider the estimation of the support of a singular measure embedded in $\R^D$. A classical assumption in this context is to
suppose that the support of the singular measure is a Riemannian manifold. As far as we know, rates of
convergence for manifold estimation, namely for the estimation of the support of a singular probability measure supported on a Riemannian manifold of
$\R ^D$,  have only been studied recently in \cite{GenoveseEtAl2012} and \cite{GPVW12}. These papers assume several noise models, which all could be
considered in this context of persistence diagram estimation. However, for the sake of simplicity, we only study here the problem where no
additional noise is observed, which is referred as the {\it noiseless model} in the first of these two papers. As before, upper bounds given in
\cite{GenoveseEtAl2012} on the rates of
convergence for the support estimation in Hausdorff distance directly provide upper bounds on the rates of convergence of the persistence diagram
of the support. Before giving the rates of convergence we first recall and discuss the assumptions of  \cite{GenoveseEtAl2012}. 

For any $r>0$ and any set $A \subset \R^d$, let $A \oplus \varepsilon := \bigcup_{a \in A } B(a, r)$.  Let $ \Delta(\X_\mu) $  be the largest $r$ such
that each point in $\X_\mu \oplus r $ has a unique projection onto $\X_\mu$, this quantity has been introduced by Federer in \cite{Federer59}, it is
called reach or condition number in the literature. 

For a fixed positive integer $d < D$, for some  fixed positive constants $b$, $B$, $\kappa$ and for a fixed compact domain $\chi$ in $\R^p$, \cite{GenoveseEtAl2012}
defines the set of probability measures $\mathcal H := \mathcal H (d,A,B,\kappa,\chi)$ on $\chi$ satisfying the two following assumptions:
\begin{itemize}
 \item $[H_1]$ The support of the measure $\mu$ is a compact Riemannian manifold $\X_\mu$ (included in $\chi$) of dimension $d$ whose reach  satisfies
 \begin{equation} \label{eq:reach}
 \Delta(\X_\mu) \geq \kappa .
 \end{equation}
 \item $[H_2]$ The measure $\mu$ is assumed to have a density $g$ with respect to $d$-dimensional volume measure 
 $vol_d$ on $\X_\mu$, such that
 \begin{equation} \label{eq:densityGenov}
  0 < A \leq \inf_{y \in \X_\mu } g(y) \leq \sup_{y \in \X_\mu } g(y) \leq B  < \infty .
 \end{equation}
\end{itemize}


These two assumptions can be easily connected to the standard assumption. Indeed, according to \cite{Niyogi-Smale-Weinberger08} and using $[H_1]$, for all  $r \leq  \kappa$ there exists some constant $C >0$ such that for any $x \in \X_\mu$, we have 
\begin{eqnarray*}
vol_d \left ( B(x , r) \cap \X_\mu \right) & \geq&  C \left( 1 - \frac {r^2} {4 \kappa^2}  \right) ^{d/2} r ^d  \\
& \geq&  C' r ^d 
\end{eqnarray*}
and the same holds for $\mu$ according to $[H_2]$. Thus, if we take $\hat \X_n$ for estimating the support $\X_\mu$ in this context, we then obtain the rate of convergence $(\frac{\log n}{n})^{ 1 /  d}$ according to Theorem \ref{theo:ConvHaus}.
Nevertheless, this rate is not minimax optimal on the spaces $\mathcal H$ as shown by Theorem 2 in \cite{GenoveseEtAl2012}. Indeed the
correct rate is $n^{- 2 / d}$. For proving this result, \cite{GenoveseEtAl2012} proposes some  ``theoretical" estimator that can not be computed in
practice. As far as we know, no usable and optimal estimator has been proposed in the literature for this issue. In consequence, the situation is the same for the estimation of persistence diagrams in this context. The following proposition shows that the optimal rates of convergence for support estimation are the same as for the persistence diagram estimation in this context.
\begin{proposition} \label{prop:GPVW}
Assume that we observe an $n$-sample under the previous assumptions, then there exist two  constants $C$  and $C'$ depending only on $\mathcal H$ such
that
\begin{equation}
 C    n ^{-2/d } \leq  \inf_{\widehat{\dgm}_n }   \sup_{ \mu  \in  \mathcal H} \E \left[ \bottle  (   \dgm (\Filt(\X_\mu)) , 
\widehat{\dgm}_n ) \right] \leq C'   n ^{-2/d }
\end{equation}
where the infimum is taken over all the estimators of the persistence diagram.
\end{proposition}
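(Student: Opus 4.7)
The strategy parallels that of Proposition~\ref{prop:lowerboundAbs}: deduce the upper bound from the optimal manifold support estimator of \cite{GenoveseEtAl2012} via the stability inequality \eqref{bot-dgh_dhaus}, and derive the lower bound by a two-point Le Cam argument (Lemma~\ref{Lem:Lecam}) based on a localized ``bump'' perturbation of a well-chosen reference manifold.

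\textbf{Upper bound.} Let $\widehat\X_n^\star$ denote the theoretical manifold estimator of \cite[Theorem~2]{GenoveseEtAl2012}, which satisfies $\sup_{\mu\in\mathcal H}\E[\dhaus(\X_\mu,\widehat\X_n^\star)]\le C\,n^{-2/d}$. Setting $\widehat{\dgm}_n:=\dgm(\Filt(\widehat\X_n^\star))$, pathwise application of \eqref{bot-dgh_dhaus} followed by taking expectation gives
\begin{equation*}
\sup_{\mu\in\mathcal H}\E\bigl[\bottle\bigl(\dgm(\Filt(\X_\mu)),\widehat{\dgm}_n\bigr)\bigr]\le 2C\,n^{-2/d},
\end{equation*}
which is the right-hand inequality.

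\textbf{Lower bound.} Pick a reference $d$-manifold $\X_0\subset\chi$ with reach $\ge 2\kappa$ whose persistence diagram $\dgm(\Filt(\X_0))$ has an off-diagonal point $p_0$ that is strictly separated from every other diagram point and from the diagonal (for concreteness, a round $d$-sphere of fixed radius, whose top-dimensional persistence bar has death scale equal to the diameter). Set $\varepsilon_n:=c_1 n^{-2/d}$ and $\delta_n:=c_2 n^{-1/d}$ with constants chosen so that $\varepsilon_n\le\delta_n^2/(4\kappa)$, and define $\X_1$ by smoothly replacing a geodesic disk of radius $\delta_n$ in $\X_0$, placed so as to maximally shift $p_0$, by a radially symmetric bump of height $\varepsilon_n$ along the outer normal. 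For $c_1,c_2$ small enough, $\X_1$ is a $C^2$ manifold of reach $\ge\kappa$, and the normalized $d$-volume measures $\mu_0,\mu_1$ on $\X_0,\X_1$ both lie in $\mathcal H$. Because the perturbation region has $d$-volume of order $\delta_n^d$, one has $\TV(\mu_0,\mu_1)\le C_3\delta_n^d = C_3 c_2^d / n$, so that $\TV(\mu_0^{\otimes n},\mu_1^{\otimes n})\le 1/2$ for $c_2$ small. A direct analysis of the chosen filtration ($\alpha$-complex, \v Cech, or Rips) in a neighborhood of the bump then shows that $p_0$ is displaced by $\Theta(\varepsilon_n)$ in $\dgm(\Filt(\X_1))$, while every other pair of matched points remains within $O(\varepsilon_n)$ with a strictly smaller multiplicative constant; this gives $\bottle(\dgm(\Filt(\X_0)),\dgm(\Filt(\X_1)))\ge c_4 \varepsilon_n$. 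Applying Lemma~\ref{Lem:Lecam} yields
\begin{equation*}
\inf_{\widehat{\dgm}_n}\sup_{\mu\in\mathcal H}\E\bigl[\bottle\bigl(\dgm(\Filt(\X_\mu)),\widehat{\dgm}_n\bigr)\bigr]\ge \tfrac14\, c_4\,\varepsilon_n = C\,n^{-2/d}.
\end{equation*}

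\textbf{Main obstacle.} The delicate step is the persistence-diagram separation: stability \eqref{bot-dgh_dhaus} only yields $\bottle\lesssim \dhaus\lesssim \varepsilon_n$, whereas the Le Cam argument requires the matching lower bound, namely that at least one point of $\dgm(\Filt(\X_1))$ sits at $\ell^\infty$-distance $\ge c_4 \varepsilon_n$ from every point of $\dgm(\Filt(\X_0))$ (and from the diagonal). This forces one to pick $\X_0$ so that the feature $p_0$ influenced by the bump is well isolated in the diagram, and to perform an explicit geometric computation of how that specific feature reacts to the bump. Once this one quantitative geometric control is secured, the remainder of the proof is the standard Le Cam bookkeeping.
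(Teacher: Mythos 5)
Your upper bound and your Le Cam bookkeeping (bump of height $\varepsilon_n\sim n^{-2/d}$ supported on a cap of radius $\delta_n\sim n^{-1/d}$, perturbed volume $\delta_n^d\sim 1/n$, hence $\TV\lesssim 1/n$ and $[1-\TV]^{2n}$ bounded below) match the paper's proof in structure and in all the exponents. But the step you yourself flag as the ``main obstacle'' --- that the perturbation moves some off-diagonal point of the diagram by $\Theta(\varepsilon_n)$ --- is precisely the content of the lower bound, and you neither prove it nor choose a configuration in which it is provable. Worse, the specific choice you make (a round $d$-sphere with a bump along the \emph{outer} normal, targeting the top-dimensional bar) is likely to fail outright: for the \v Cech/$\alpha$ filtration the death time of the top-dimensional class is governed by the deepest point of the enclosed cavity (the largest empty ball inside), which an outward bump does not touch, and for the Rips filtration the death time of the top class of a sphere is itself a delicate quantity whose response to a localized $C^2$ perturbation is not something one can ``directly analyze'' in a line. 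So as written the lower bound has a genuine gap.

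The paper avoids this difficulty entirely by a different choice of two-point alternative: $\X_0=M\cup M'$ consists of two parallel flat $d$-dimensional cubes at distance $2\kappa$, and $\X_1$ replaces $M$ by a perturbation $M_\gamma$ (taken off the shelf from Theorem~6 of \cite{GPVW12}, which already guarantees reach $\kappa$, $\dhaus(M_\gamma,M)=\gamma$, $\dhaus(M_\gamma,M')=2\kappa-\gamma$, and perturbed measure $\lesssim\gamma^{d/2}$). The only feature of the diagram that changes is the $0$-dimensional bar recording the merging of the two connected components, whose death time drops from $2\kappa$ to $2\kappa-\gamma$ \emph{exactly}, because it is just the distance between the two components. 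That makes the diagram separation $\bottle=\gamma$ a one-line computation instead of a geometric analysis of how a specific high-dimensional homology class reacts to a bump. If you want to salvage your construction, switch to a disconnected reference space and direct the bump toward the other component (or otherwise reduce the affected feature to an inter-point distance); as it stands, the claim $\bottle(\dgm(\Filt(\X_0)),\dgm(\Filt(\X_1)))\ge c_4\varepsilon_n$ is unsupported.
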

Of course this result is only of theoretical interest since it is not based on estimators which are usable in practice.

\section{Experiments}
\label{sec:PersDiagLearn}

A series of experiments were conducted in order to illustrate the behavior of the persistence diagrams under sampling of metric spaces endowed with a probability measure and to compare the convergence performance obtained in practice with the theoretical results obtained in the previous sections.

\paragraph{Spaces and data.} We consider four different metric spaces, denoted $\bigM_1$, $\bigM_2$, $\bigM_3$ and $\bigM_4$  hereafter, that are described below. 
\begin{itemize}
\item[{\bf $\bigM_1$}] {\bf (Lissajous curve in $\mathbb{R}^2$):} the planar curve with the parametric equations $x(t) = \sin (3t+ \pi/2)$, $y(t) = \sin(2t)$, $t \in [0, 2 \pi]$ (see Figure \ref{fig:spaces}, left). Its metric is the restriction of the Euclidean metric in $\mathbb{R}^2$ and it is endowed with the push forward by the parametrization of the uniform measure on the interval $[0,2 \pi]$. 
\item[{\bf $\bigM_2$}] {\bf (sphere in $\mathbb{R}^3$):} the unit sphere in $\mathbb{R}^3$ (see Figure \ref{fig:spaces}, center). Its metric is the restriction of the Euclidean metric in $\mathbb{R}^3$ and it is endowed with the uniform area measure on the sphere. 
\item[{\bf $\bigM_3$}] {\bf (torus in $\mathbb{R}^3$):} the torus of revolution in $\mathbb{R}^3$ with the parametric equations $x(u,v) = (5 + \cos(u)) \cos(v)$, $y(u,v) = (5 + \cos(u)) \sin(v)$ and $z(u,v) = \sin(u)$, $(u,v) \in [0, 2 \pi]^2$ (see Figure \ref{fig:spaces}, right). Its metric is the restriction of the Euclidean metric in $\mathbb{R}^3$ and it is endowed with the push forward by the parametrization of the uniform measure on the square $[0, 2 \pi]^2$.
\item[{\bf $\bigM_4$}] {\bf (rotating shape space):} for this space we used a 3D character from the SCAPE database \cite{scape} and  considered all the images of this character from a view rotating around it. We converted these images in gray color and resized these images to $300\times 400 = 120,000$ pixels (see Figure \ref{fig:spaceM4}). Each is then identified with a point in $\mathbb{R}^{120,000}$ where the $i^{th}$ coordinate is the level of gray of the $i^{th}$ pixel. Moreover, we normalized these images by projecting them on the unit sphere in  $\mathbb{R}^{120,000}$. The metric space $\bigM_4$ is the obtained subset of the unit sphere with the restriction of the Euclidean metric in $\mathbb{R}^{120,000}$. As it is parametrized by a circular set of views, it is endowed with the push forward of the uniform measure on the circle. 
\end{itemize}

\begin{figure}
\centering
\includegraphics[height=3.5cm]{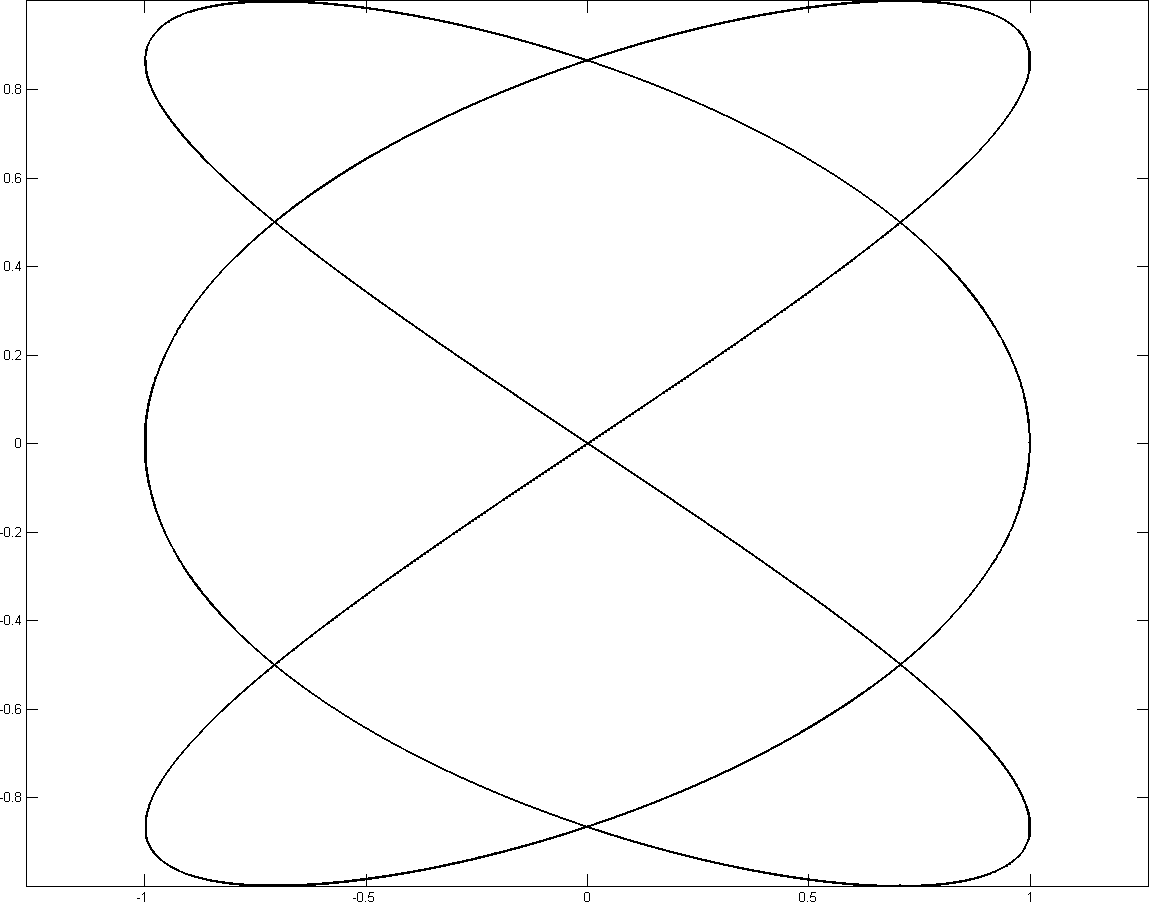}\qquad
\includegraphics[height=3.5cm]{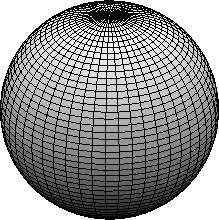}\qquad
\includegraphics[height=3.5cm]{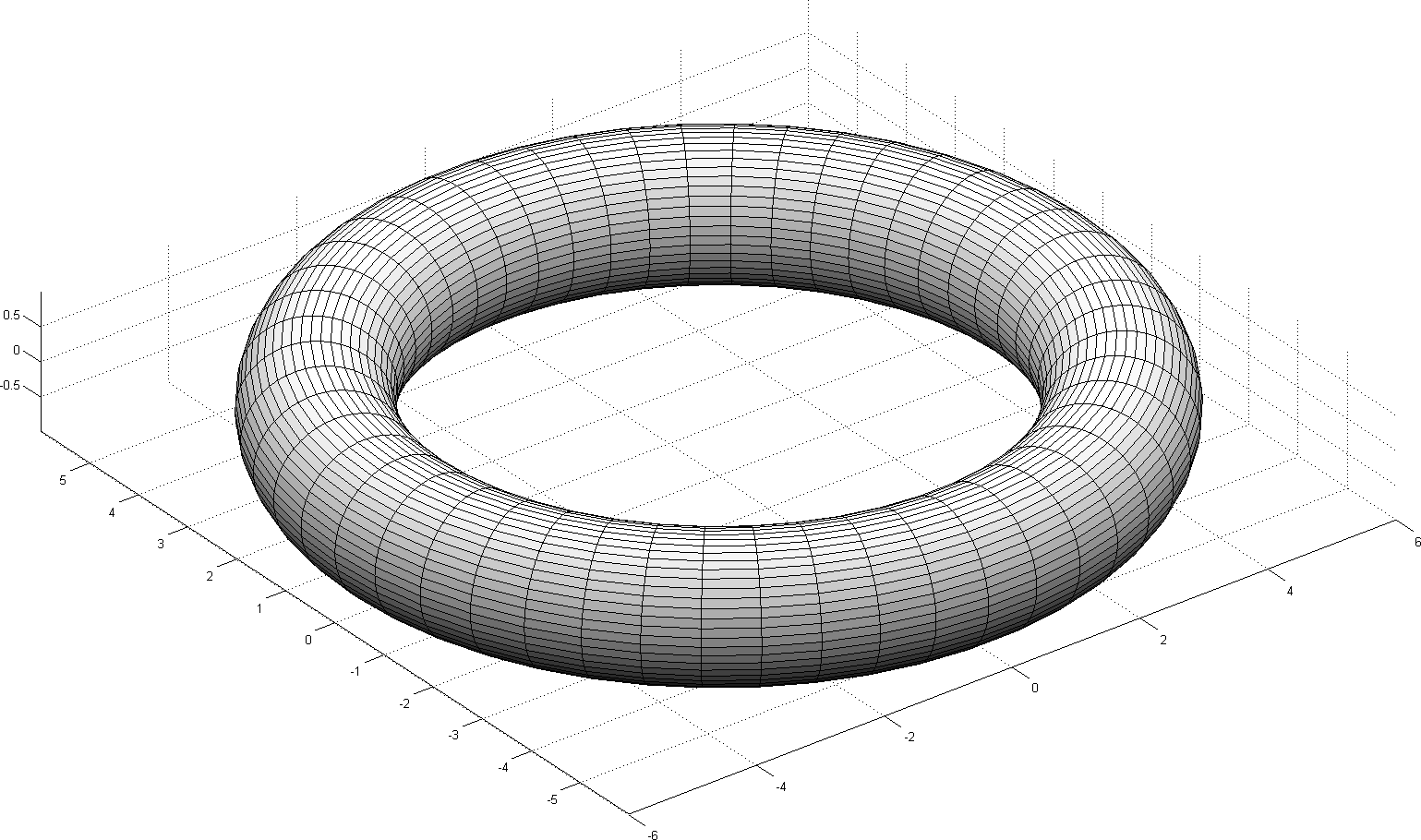}
\caption{The spaces $\bigM_1$, $\bigM_2$ and $\bigM_3$.}
\label{fig:spaces}
\end{figure}

\begin{figure}
\centering
\includegraphics[width = 0.15\columnwidth]{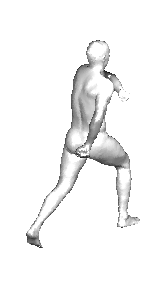}
\includegraphics[width = 0.15\columnwidth]{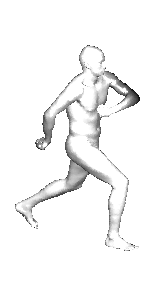}
\includegraphics[width = 0.17\columnwidth]{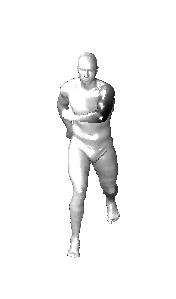}
\includegraphics[width = 0.17\columnwidth]{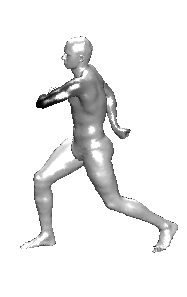}
\includegraphics[width = 0.15\columnwidth]{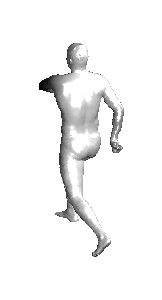}
\includegraphics[width = 0.17\columnwidth]{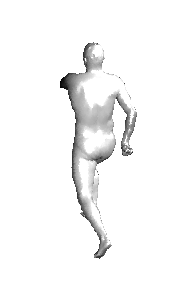}
\caption{Images sampled from the space $\bigM_4$.}
\label{fig:spaceM4}
\end{figure}


\paragraph{The experiments.} From each of the measured metric spaces $\bigM_1$, $\bigM_2$, $\bigM_3$ and $\bigM_4$ we sampled $k$ sets of $n$ points for different values of $n$ from which we computed persistence diagrams for different  geometric complexes (see Table \ref{table:spaces}). 
For $\bigM_1$, $\bigM_2$ and $\bigM_3$ we have computed the persistence diagrams for the $1$ or $2$-dimensional homology of the $\alpha$-complex built on top of the sampled sets. As $\alpha$-complexes have the same homotopy type as the corresponding union of balls, these persistence diagrams are the ones of the distance function to the sampled point set \cite{edelsbrunner1995union}. So, for each $n$ we computed the average bottleneck distance between the obtained diagrams and the  persistence diagram of the distance to the metric space from which the points were sampled. 
For $\bigM_4$, as it is embedded in a very high dimensional space, computing the $\alpha$-complex is practically out of reach. So we have computed the persistence diagrams for the $1$-dimensional homology of the Vietoris-Rips complex built on top of the sampled sets. The obtained results are described and discussed below.
\begin{itemize}
\item {\bf Results for $\bigM_1$:} we approximated the $1$-dimensional homology persistence diagram of the distance function to the Lissajous curve $\dgm(\bigM_1)$ by sampling $\bigM_1$ with $500,000$ points and computing the persistence diagram of the corresponding $\alpha$-complex. As the Hausdorff distance between our sample and $\bigM_1$  was of order $10^{-5}$ we obtained a sufficiently precise approximation of $\dgm(\bigM_1)$ for our purpose. The diagram $\dgm(\bigM_1)$ is represented in blue on the left of Figure \ref{fig:LissajousConvergence}. 
For each $n$, the average bottleneck distance between $\dgm(\bigM_1)$ and the persistence diagrams obtained for the $k=300$ randomly sampled sets $\X_n$ of size $n$ has been used as an estimate $\hat \E$ of $\E \left[ \bottle  (   \dgm (\alphaC(\bigM_1)) ,   \dgm(\alphaC(\widehat \X_n)) ) \right]$ where $\alphaC$ denotes the $\alpha$-complex filtration. $\log(\hat \E)$ is plotted as a function of $\log(\log(n)/n)$ on Figure \ref{fig:LissajousConvergence}, right. As expected, since the Lissajous curve is $1$-dimensional, the points are close to a line of slope $1$. 

\item{\bf Results for $\bigM_2$ and $\bigM_3$:} the persistence diagrams $\dgm(\bigM_2)$ and $\dgm(\bigM_2)$ of the distance functions to $\bigM_2$ and $\bigM_3$ are known exactly and are represented in blue on Figures \ref{fig:SphereConvergence} and \ref{fig:TorusConvergence}, left, respectively. Notice that we considered the $2$-dimensional homology for $\bigM_2$ and $1$-dimensional homology for $\bigM_3$.
For $i=2,3$ and for each $n$, the average bottleneck distance between $\dgm(\bigM_i)$ and the persistence diagrams obtained for the $k=100$ randomly sampled sets $\X_n$ of size $n$ has been used as an estimate $\hat \E$ of $\E \left[ \bottle  (   \dgm (\alphaC(\bigM_i)) ,   \dgm(\alphaC(\widehat \X_n)) ) \right]$ where $\alphaC$ denotes the $\alpha$-complex filtration. $\log(\hat \E)$ is plotted as a function of $\log(\log(n)/n)$ on Figures  \ref{fig:SphereConvergence} and \ref{fig:TorusConvergence}, right. As expected, since the sphere and the torus are $2$-dimensional, the points are close to a line of slope $1/2$. 

\item{\bf Results for $\bigM_4$:} As in that case we do not know the persistence diagram of the Vietoris-Rips filtration built on top of $\bigM_4$, we only computed the $1$-dimensional homology persistence diagrams of the Vietoris-Rips filtrations built on top of $20$ sets of $250$ points each, randomly sampled on $\bigM_4$. All these diagrams have been plotted on the same Figure \ref{fig:shaperotation}, left. The right of Figure \ref{fig:shaperotation}  represents a 2D embedding of one of the $250$ points sampled data set using the Multidimensional Scaling algorithm (MDS). Since $\bigM_4$ is a set of images taken according a rotating point of view, it carries a cycle structure. This structure is reflected in the persistence diagrams that all have one point which is clearly off the diagonal. Notice also a second point off the diagonal which is much closer to it and that probably corresponds to the pinching in $\bigM_4$ visible at the bottom left of the MDS projection. 
\end{itemize}

\begin{table}
\centering
\begin{tabular}{|l|c|c|c|}
\hline
\hline
{\bf Space} & {\bf $k$ (sampled sets for each $n$)} & {\bf $n$ range} & {\bf Geometric complex} \\
\hline
\hline
$\bigM_1$ & 300 & $[2100:100:3000]$ & $\alpha$-complex  \\
\hline
$\bigM_2$ & 100 & $[12000:1000:21000]$ & $\alpha$-complex  \\
\hline
$\bigM_3$ & 100 & $[4000:500:8500]$ & $\alpha$-complex  \\
\hline
$\bigM_4$ & 20 & 250 & Vietoris-Rips complex \\
\hline
\hline
\end{tabular}
\caption{Sampling parameters and geometric complexes where $[n_1: h: n_2]$ denotes the set of integers $\{n_1, n_1+h, n_1+2h, \cdots n_2 \}$.}
\label{table:spaces}
\end{table}

\begin{figure}
\centering
\includegraphics[width = 0.45\columnwidth]{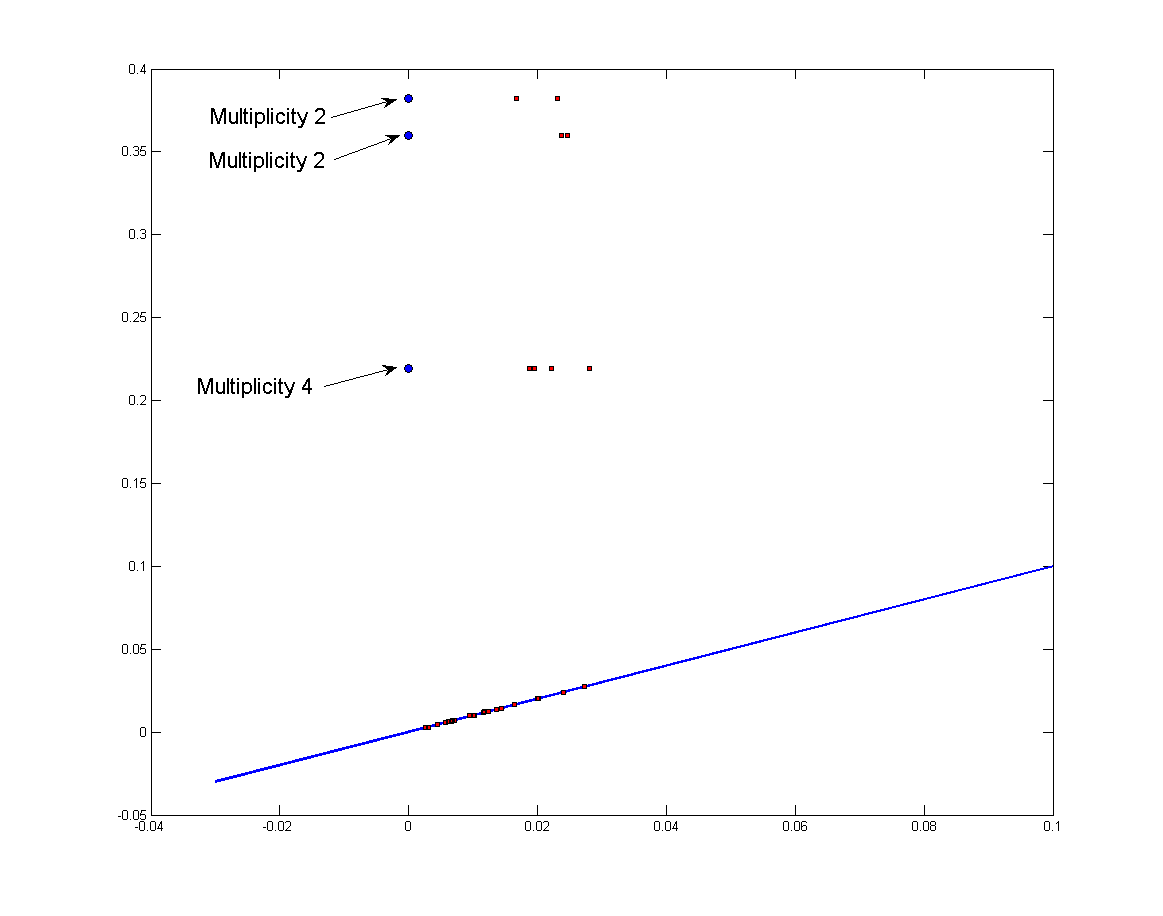}
\includegraphics[width = 0.45\columnwidth]{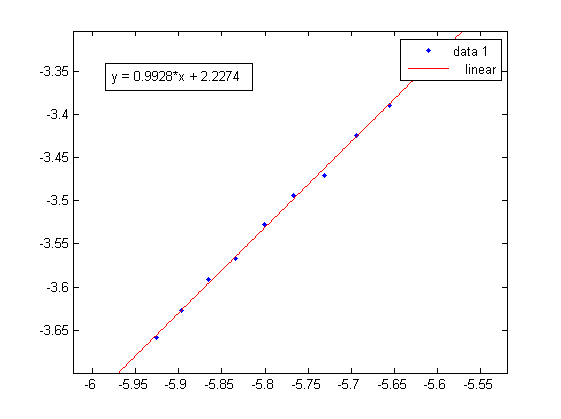}
\caption{Convergence rate for the persistence diagram of the $\alpha$-filtration built on top of points sampled on $\bigM_1$. Left: in blue the persistence diagram $\dgm(\bigM_1)$ of the distance to $\bigM_1$ ($1$-dimensional homology); in red a persistence diagram of the $\alpha$-filtration built on top of $n = 2100$ points randomly sampled on $\bigM_1$. Right: the $x$-axis is $\log(\log(n)/n)$ where $n$ is the number of points sampled on $\bigM_1$. The $y$-axis is the $\log$ of the estimated expectation of the bottleneck distance between the diagram obtained from an $\alpha$-filtration built on top of $n$ points sampled on $\bigM_1$ and $\dgm(\bigM_1)$.}
\label{fig:LissajousConvergence}
\end{figure}

\begin{figure}
\centering
\includegraphics[width = 0.45\columnwidth]{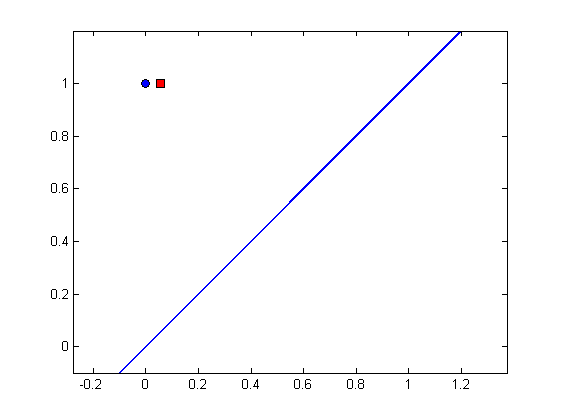}
\includegraphics[width = 0.45\columnwidth]{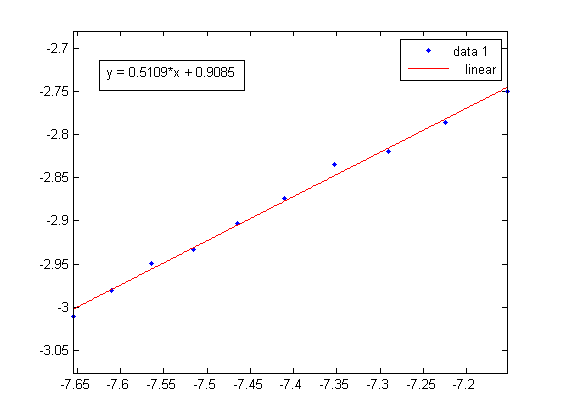}
\caption{Convergence rate for the persistence diagram of the $\alpha$-filtration built on top of points sampled on $\bigM_2$. Left: in blue the persistence diagram $\dgm(\bigM_2)$ of the distance to $\bigM_2$ ($2$-dimensional homology); in red a persistence diagram of the $\alpha$-filtration built on top of $n = 12000$ points randomly sampled on $\bigM_2$. Right: the $x$-axis is $\log(\log(n)/n)$ where $n$ is the number of points sampled on $\bigM_2$. The $y$-axis is the $\log$ of the estimated expectation of the bottleneck distance between the diagram obtained from an $\alpha$-filtration built on top of $n$ points sampled on $\bigM_2$ and $\dgm(\bigM_2)$.}
\label{fig:SphereConvergence}
\end{figure}

\begin{figure}
\centering
\includegraphics[width = 0.45\columnwidth]{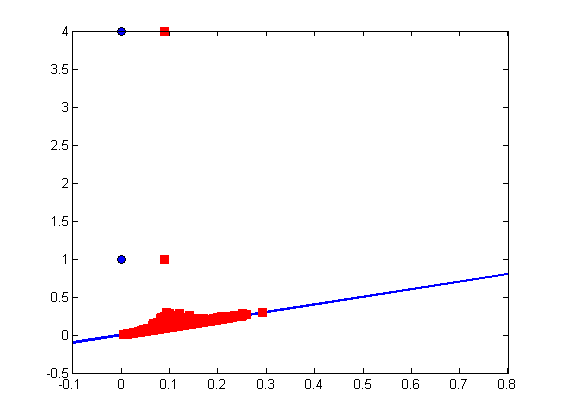}
\includegraphics[width = 0.45\columnwidth]{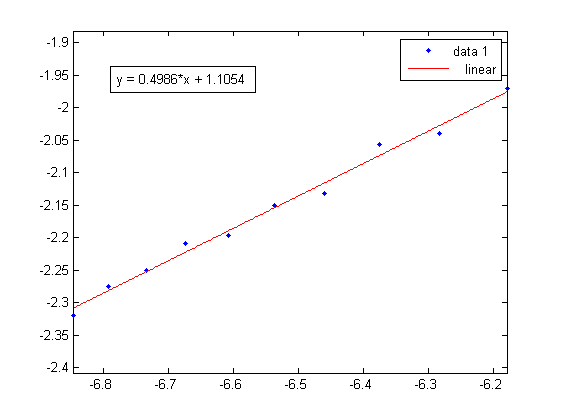}
\caption{Convergence rate for the persistence diagram of the $\alpha$-filtration built on top of points sampled on $\bigM_3$. Left: in blue the persistence diagram $\dgm(\bigM_3)$ of the distance to $\bigM_3$ ($1$-dimensional homology); in red a persistence diagram of the $\alpha$-filtration built on top of $n = 14000$ points randomly sampled on $\bigM_3$. Right: the $x$-axis is $\log(\log(n)/n)$ where $n$ is the number of points sampled on $\bigM_3$. The $y$-axis is the $\log$ of the estimated expectation of the bottleneck distance between the diagram obtain from  $\alpha$-filtration built on top of $n$ points sampled on $\bigM_3$ and $\dgm(\bigM_3)$.}
\label{fig:TorusConvergence}
\end{figure}

\begin{figure}
\centering
\includegraphics[width = 0.45\columnwidth]{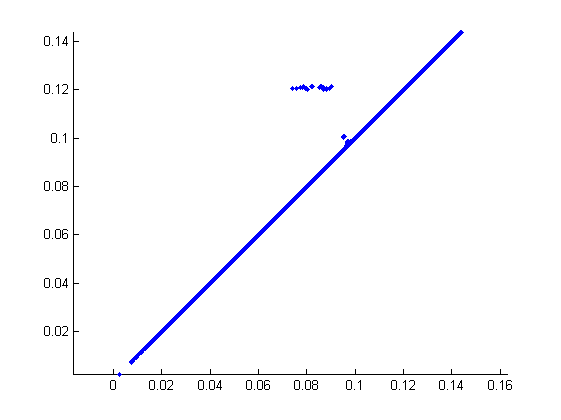}
\includegraphics[width = 0.45\columnwidth]{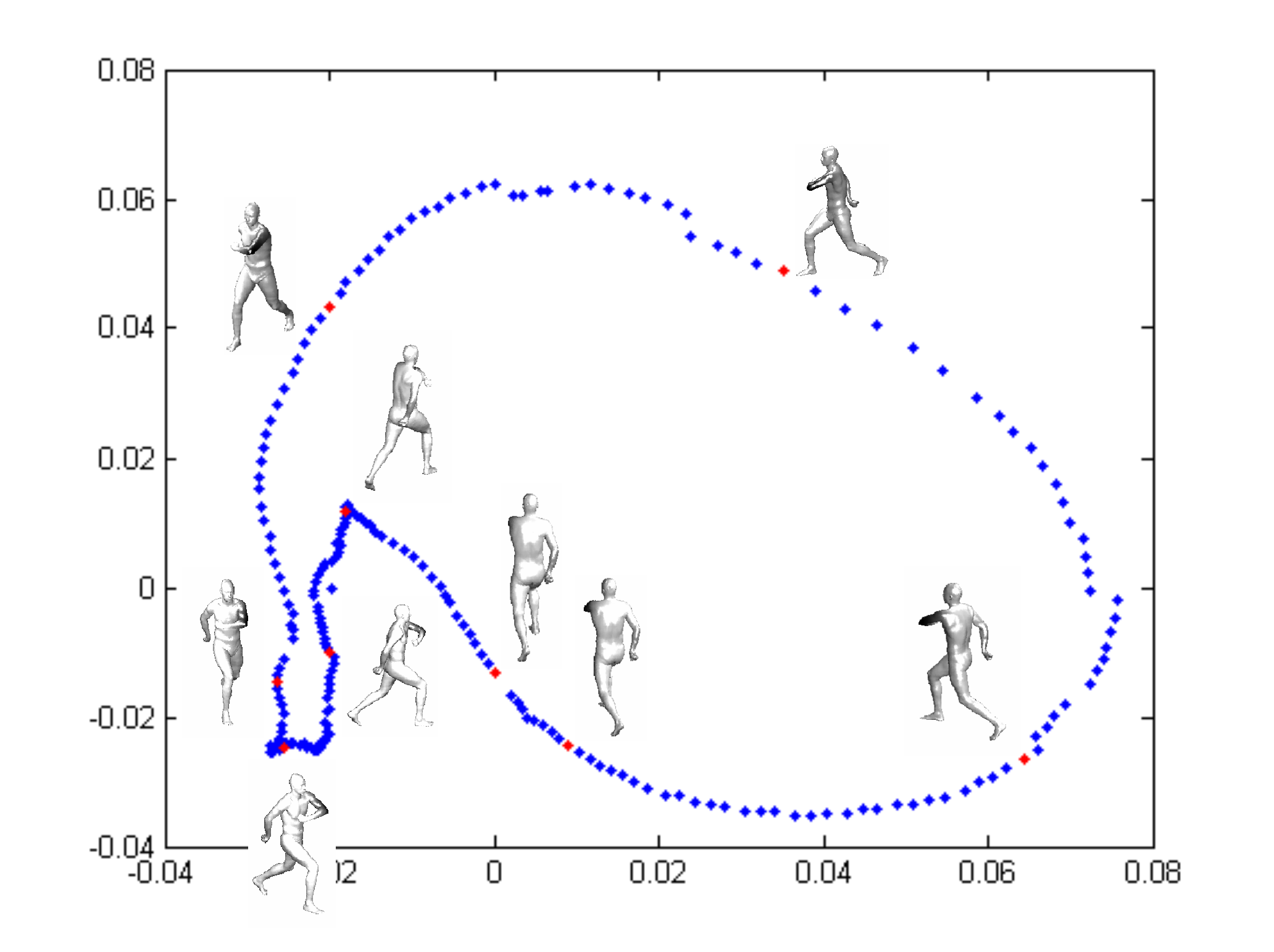}
\caption{Left: on the same figure the $1$-dimensional homology persistence diagrams of the Vietoris-Rips filtration of $20$ sets of $250$ points sampled on $\bigM_4$. Right: the plot of the embedding of  $\bigM_4$ in $\mathbb{R}^2$ using MDS.}
\label{fig:shaperotation}
\end{figure}

\section{Discussion and future works}
\label{sec:disc}

In previous works, the use of persistent homology in TDA has been mainly considered with a deterministic approach. As a consequence persistence diagrams were usually used as exploratory tools to analyze the topological structure of data. 
In this paper, we propose a rigorous framework to study the statistical properties of persistent homology and more precisely we give a general approach to study the rates of convergence for the estimation of persistence diagrams. The results we obtain open the door to a rigorous use of persistence diagrams in statistical framework. 
Our approach, consisting in reducing persistence diagram estimation to another more classical estimation problem (here support estimation) is based upon recently proven stability results in persistence theory that are very general. As a consequence, our approach can be adapted to other frameworks. For example, the estimation of persistence diagrams of functions (e.g. densities) is strongly connected to the problem of the approximation of such functions with respect to the sup norm. In particular, building on ideas developed in \cite{chazal2011geometric} and \cite{ccdm-dwmgi-11}, we intend to extend our results to persistence diagram estimation of distance-to-measure functions and topological inference from data corrupted by different kind of noise. 


In another direction, an interesting representation of persistence diagrams as elements of a Hilbert space has recently been proposed in \cite{Bubenik12}. Our results easily extend to this representation of persistence diagrams called {\em persistence landscapes}. 
Following this promising point of view, we also intend  to adapt classical kernel-based methods with kernels carrying topological information.

\appendix

\section{Lecam's Lemma}

The  version of Lecam's Lemma given below is from \cite{Yu97} (see also \cite{GPVW12}). Recall that  the total variation distance between two distributions $P_0$ and $P_1$ on a measured space $(\mathcal X, \mathcal B) $ is defined by 
$$ \TV(P_0,P_1)  = \sup_{B \in  \mathcal B} | P_0(B) - P_1(B) |.$$
Moreover, if $P_0$ and $P_1$ have densities $p_0$ and $p_1$
 for the same measure $\lambda$ on $\mathcal X$, then 
$$ \TV(P_0,P_1)    = \frac 1 2 \ell_1(p_0,p_1) := \int_{\mathcal X} |p_0-p_1|  d \lambda. $$

\begin{lemma} \label{Lem:Lecam} Let $\mathcal P $ be a set of distributions. For $P \in  \mathcal P$, let $\theta(P)$ take values in a metric space $(\X,\rho)$. Let
$P_0$ and $P_1$ in $\mathcal P$ be any pair of distributions. Let $X_1,\dots,X_n$ be drawn i.i.d. from some $P \in  \mathcal P$. Let $\hat 
\theta = \hat \theta(X_1,\dots,X_n) $ be any estimator of $\theta(P)$, then
\begin{equation*}
 \sup_{ P \in \mathcal P} \E _{P^n} \rho( \theta , \hat \theta  )  \geq   \frac 1 8    \rho \left( \theta(P_0), \theta(P_1) \right)   
\left[1 -  \TV(P_0,P_1) \right] ^{2 n } .
\end{equation*}
\end{lemma}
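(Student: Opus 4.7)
The plan is to carry out the classical two-point reduction underlying Le Cam's method. First I restrict the supremum on the left to the pair $\{P_0, P_1\} \subseteq \mathcal{P}$ and lower bound the maximum by the average, obtaining
$$\sup_{P \in \mathcal{P}} \E_{P^n}\rho(\theta(P),\hat\theta) \;\geq\; \tfrac{1}{2}\bigl[\E_{P_0^n}\rho(\theta(P_0),\hat\theta) + \E_{P_1^n}\rho(\theta(P_1),\hat\theta)\bigr].$$
Write $\Delta := \rho(\theta(P_0), \theta(P_1))$ and apply Markov's inequality at the threshold $\Delta/2$: each $\E_{P_i^n}\rho(\theta(P_i), \hat\theta) \geq \tfrac{\Delta}{2}\,P_i^n(A_i)$, where $A_i := \{\rho(\hat\theta,\theta(P_i)) \geq \Delta/2\}$. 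The triangle inequality $\Delta \leq \rho(\hat\theta, \theta(P_0)) + \rho(\hat\theta, \theta(P_1))$ forces $A_0 \cup A_1$ to cover the whole sample space. Defining a test $\psi \in \{0,1\}$ from $\hat\theta$ that chooses $i$ whenever $\hat\theta$ lies within $\Delta/2$ of $\theta(P_i)$, this yields $P_0^n(A_0)+P_1^n(A_1) \geq P_0^n(\psi=1)+P_1^n(\psi=0)$, reducing the estimation problem to a testing problem between $P_0^n$ and $P_1^n$.

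Next, I invoke the Neyman-Pearson identity to convert the testing error into an affinity,
$$\inf_\psi \bigl[P_0^n(\psi=1)+P_1^n(\psi=0)\bigr] \;=\; 1 - \TV(P_0^n, P_1^n) \;=\; \int \min(p_0^n, p_1^n)\,d\lambda^n,$$
where $p_i^n$ denote densities of $P_i^n$ with respect to any common dominating measure (e.g.\ $\lambda = (P_0+P_1)/2$). Chaining this with the previous paragraph gives
$$\sup_{P \in \mathcal{P}} \E_{P^n}\rho(\theta(P),\hat\theta) \;\geq\; \tfrac{\Delta}{4}\int \min(p_0^n, p_1^n)\,d\lambda^n,$$
so the remaining task is to lower bound the product affinity by a single-copy quantity involving $\TV(P_0, P_1)$.

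For this last step I apply Cauchy-Schwarz to the Hellinger affinity. Writing $\sqrt{fg} = \sqrt{\min(f,g)\cdot\max(f,g)}$ and using $\int \max(f,g)\,d\lambda = 2 - \int \min(f,g)\,d\lambda \leq 2$, Cauchy-Schwarz yields $\bigl(\int\sqrt{fg}\,d\lambda\bigr)^2 \leq 2\int \min(f,g)\,d\lambda$, i.e.\ $\int \min(f,g)\,d\lambda \geq \tfrac{1}{2}\bigl(\int\sqrt{fg}\,d\lambda\bigr)^2$. Applied to $(f,g) = (p_0^n, p_1^n)$ together with the tensorization identity $\int\sqrt{p_0^n p_1^n}\,d\lambda^n = \bigl(\int\sqrt{p_0 p_1}\,d\lambda\bigr)^n$ (Fubini) and the pointwise bound $\sqrt{p_0 p_1} \geq \min(p_0,p_1)$ giving $\int\sqrt{p_0 p_1}\,d\lambda \geq 1 - \TV(P_0,P_1)$, this produces
$$\int \min(p_0^n, p_1^n)\,d\lambda^n \;\geq\; \tfrac{1}{2}\bigl(1 - \TV(P_0,P_1)\bigr)^{2n}.$$
Multiplying by the $\Delta/4$ factor collected earlier yields exactly the constant $1/8$ in the statement.

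The main obstacle I anticipate is bookkeeping the constants precisely enough to land $1/8$: losing a factor of $2$ in the max-over-average reduction, the Markov step, or the Cauchy-Schwarz affinity bound would degrade the constant. The somewhat delicate piece is the Cauchy-Schwarz/tensorization step, which is where the exponent $2n$ (rather than $n$) arises and where one must be careful to bound $\int \max$ by $2$ rather than by $2 - \int\min$, so that the final bound depends only on $\TV(P_0, P_1)$ and not on the affinity itself.
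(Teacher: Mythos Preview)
Your proof is correct. Each step is sound: the max-to-average reduction, the Markov/triangle-inequality passage to a testing problem, the Neyman--Pearson identity $\inf_\psi[P_0^n(\psi=1)+P_1^n(\psi=0)]=\int\min(p_0^n,p_1^n)\,d\lambda^n$, and the Cauchy--Schwarz/tensorization chain $\int\min(p_0^n,p_1^n)\,d\lambda^n\geq\tfrac12\bigl(\int\sqrt{p_0p_1}\,d\lambda\bigr)^{2n}\geq\tfrac12(1-\TV(P_0,P_1))^{2n}$ all track, and the constants combine to $1/8$ as claimed.

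There is, however, nothing to compare against: the paper does not prove this lemma. It is stated in the appendix with the attribution ``The version of Lecam's Lemma given below is from \cite{Yu97} (see also \cite{GPVW12})'' and then simply invoked as a black box in the lower-bound arguments for Theorem~\ref{prop:lowerboundAbs}, Proposition~\ref{prop:dplusalpha}, and Proposition~\ref{prop:GPVW}. Your write-up is essentially the standard derivation one finds behind that citation, so it is consistent with---but strictly more detailed than---what the paper itself provides.
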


\section{Proofs}
\label{sec:Proofs}

\subsection{Proof of Theorem \ref{theo:ConvHaus}}

The proof follows the lines of the proof of Theorem 3 in \cite{CuevasRCasal04} . The only point to be checked is that the covering number of  $\X_\mu$
under the $(a,b)$-standard assumption can be controlled as when  $b=d \in \N$, the rest of the proof being unchanged.

The covering number $\cov(\X_\mu,r)$ of $\X_\mu$ is the minimum number of balls of radius $r$ that are necessary to cover $\X_\mu$:
$$ \cov(\X_\mu,r) = \min \left\{ k  \in \N^* \, : \, \exists  (x_1, \dots, x_k) \in (\X_\mu)^k  \textrm{ such that }\X_\mu = \bigcup_{i=1}^k B(X_i,r) \right\}.$$
The packing number $\pk(\X_\mu,r)$ is the maximum number of balls of radius $r$ that can be packed in $\X_\mu$ without overlap: 
$$\pk(\X_\mu,r) = \max \left\{ k  \in \N^* \, : \, \exists  (x_1, \dots, x_k) \in (\X_\mu)^k  \textrm{ such that } B(x_i,r) \subset \X_\mu \textrm{ and } \forall i \not = j \, B(x_i,r) \cap B(x_j,r) =
\emptyset \right\}$$
The covering and packing numbers are related by the following inequalities (see for instance \cite{Massart:07} p.71):
\begin{equation} \label{eq:pkcov}
 \pk(\X_\mu,2r) \leq \cov(\X_\mu,2r) \leq \pk(\X_\mu,r) .
\end{equation}
\begin{lemma} \label{lemma:packing-bound}
Assume that the probability $\mu$ satisfies a {\it standard} $(a,b)$-{\it assumption}. Then for  any $r>0$ we have 
\[
\pk(\X_\mu,r) \leq \frac{1}{a r^b}  \vee 1   \ \mbox{\rm and} \ \cov(\X_\mu,r) \leq \frac{2^b}{a r^b}  \vee 1 .
\] 
\end{lemma}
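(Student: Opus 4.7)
The plan is to obtain the packing bound by a volume-type argument using the standard assumption, then deduce the covering bound from the packing/covering comparison~(\ref{eq:pkcov}) already recorded in the text.

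First I would fix $r>0$ and let $N=\pk(\X_\mu,r)$, so there exist $x_1,\dots,x_N\in\X_\mu$ with the open balls $B(x_i,r)$ pairwise disjoint. Since $\mu$ is a probability measure supported on $\X_\mu$, disjointness yields
\[
\sum_{i=1}^N \mu(B(x_i,r)) \;\leq\; \mu(\bigM) \;=\; 1.
\]
On the other hand, the $(a,b)$-standard assumption~(\ref{ref:SdtAssump2}) applied at each center gives $\mu(B(x_i,r))\geq 1\wedge a r^b$ for every $i$. Combining the two inequalities yields $N\,(1\wedge a r^b)\leq 1$, hence
\[
\pk(\X_\mu,r) \;\leq\; \frac{1}{1\wedge a r^b} \;=\; \frac{1}{a r^b}\vee 1,
\]
treating separately the two cases $a r^b\geq 1$ (where the right-hand side equals $1$ and trivially $N\leq 1$, because any single ball already has full measure) and $a r^b<1$.

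For the covering bound, I would simply invoke the right inequality in~(\ref{eq:pkcov}) with parameter $r/2$:
\[
\cov(\X_\mu,r) \;\leq\; \pk(\X_\mu,r/2) \;\leq\; \frac{1}{a (r/2)^b}\vee 1 \;=\; \frac{2^b}{a r^b}\vee 1,
\]
which is the claimed estimate. There is no real obstacle here; the only mild subtlety is keeping track of the $\vee 1$ to make the bound valid in the regime $a r^b\geq 1$ where the standard assumption degenerates to $\mu(B(x,r))\geq 1$.
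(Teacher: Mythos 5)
Your proof is correct and follows essentially the same route as the paper's: sum the measures of the disjoint packing balls, bound each from below by the standard assumption to get the packing estimate, and then pass to the covering number via $\cov(\X_\mu,r)\leq\pk(\X_\mu,r/2)$. The only cosmetic difference is that the paper disposes of the regime $r\geq a^{-1/b}$ as "trivial" up front, whereas you carry the $\vee\,1$ through uniformly (and correctly justify why at most one ball can have full measure).
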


\begin{proof} The result is trivial for $r \geq a ^{-1/b}$. Let $r < a ^{-1/b}$ and let $p=\pk(\X_\mu,r)$, we choose a maximal packing $B_1 = B(x_1,r),  \cdots , B_{p} = B(x_{p},r)$ of $\X_\mu$.
Since the balls of the packing are pairwise disjoint and $\mu$ is a probability measure we have $\sum_{i=1}^p \mu(B_i) \leq 1$. Using that
$\mu(B_i) \geq a r^b$ we obtain that $p a r^b \leq \sum_{i=1}^p \mu(B_i)  \leq 1$ from which we get the upper bound on $\pk(\X_\mu,r)$. Since
from (\ref{eq:pkcov}) we have $\cov(\X_\mu,r) \leq \pk(\X_\mu,r/2)$ we immediately deduce the upper bound on $\cov(\X_\mu,r)$.
\end{proof}

\subsection{Proof of Proposition~\ref{prop:lowerboundAbs}}

\subsubsection*{Upper bound}
We first prove the upper bound. According to Corollary~\ref{cor:UpperRatesAbstract}, thanks to Fubini we have
\begin{eqnarray*}
\E  \left[ \bottle  (   \dgm (\Filt(\X_\mu)) ,   \dgm(\Filt(\widehat \X_n))  ) \right]  &\leq  & \int_{\e >0 } \P  \left[�  \bottle  (  
\dgm (\Filt(\X_\mu)) ,   \dgm(\Filt(\widehat \X_n))  ) > \e  \right]  d \e 
\end{eqnarray*}
Let $ \e_n  = 4 \left(  \frac {\log     n   } { a n} \right)^{1/b}$. By bounding the probability inside this integral by one on
$[0,\e_n]$,  we find that:
 \begin{eqnarray*}
\E  \left[ \bottle  (   \dgm (\Filt(\X_\mu)) ,   \dgm(\Filt(\widehat \X_n))  ) \right]  
&\leq  & \e _n + \int_{\e > \e _n  } \frac{8^b}{a}   \e^{-b} \exp(-na \e^b / 4 ^b )   d \e  \\
&\leq  & \e_n + \frac{4  n 2 ^b} {b} (na)^{-1/b}  \int_{u  \geq \log    n   } u ^{1/b-2}   \exp(- u  )   d u .
\end{eqnarray*}
Now, if $b \geq \frac 1 2$ then $u ^{1/b-2} \leq (\log n) ^{1/b-2}$ for any $u  \geq \log   n $  and then
 \begin{eqnarray}  \E  \left[ \bottle  (   \dgm (\Filt(\X_\mu)) ,   \dgm(\Filt(\widehat \X_n))  ) \right] &\leq & \e_n + 4 \frac {2^b} {b} \left(
\frac{\log n} n \right) ^{1/b} (\log n)^{-2}  \notag \\
&\leq & C_1(a,b)\left( \frac{\log n} n \right) ^{1/b} \label{majorEdb}
\end{eqnarray}
where the constant $C_1(a,b)$ only depends on $a$ and $b$. If  $0 < b < \frac 1 2$, let $p:=  \lfloor \frac 1 b \rfloor$ and then
 \begin{eqnarray*}
\int_{u  \geq u_n :=\log n}   u ^{1/b-2}\exp(- u  )   d u & = &  u_n ^{1/b-2} \exp(u_n) + (\frac 1 b -2) u_n^{1/b-3}\exp(u_n)  + \dots + \\
&   & + \prod_{i = 2}^p \left(\frac 1 b -i\right) u_n ^{1/b-p} \exp(u_n) +  \int_{u  \geq \log    n   } u ^{1/b-p - 1}   \exp(- u  )   d u \\
& \leq  & C_2(a,b) \frac{(\log n) ^{1/b-2}}{n}
 \end{eqnarray*}
where $C_2(a,b)$ only depends on $a$ and $b$. Thus (\ref{majorEdb}) is also satisfied for $  b < \frac 1 2$ and the upper bound is proved.

\subsubsection*{Lower bound}

To prove the lower bound, it will be sufficient to consider two Dirac distributions.  We take for $P_{0,n} = P_{x}$ the Dirac distribution on $\X_{0}
:= \{x\} $ and it is clear that $P_{0} \in \mathcal{P}(a,b,\bigM)$. Let $P_{1,n}$ be the distribution $\frac 1 n \delta_{x_n} + (1 -\frac 1 n) P_{0}$.
The support of $P_{1,n}$ is denoted $\X_{1,n} := \{x\}  \cup \{x_n\}$. Note that for any $n \geq 2$ and any $r \leq \rho(x,x_n)$:
$$ P_{1,n}\left(B(x,r) \right)=  1- \frac 1 n \geq  \frac  1 2   \geq \frac  1 {2 \rho(x,x_n)^b} r^b \geq a r^b  $$
and
$$ P_{1,n}\left(B(x_n,r) \right)=   \frac 1 n  = \frac  {  1 } {n \rho(x,x_n)^b} r^b \geq a r^b . $$
Moreover, for $r  > \rho(x,x_n)$, $ P_{1,n}\left(B(0,r) \right)=  P_{1,n}\left(B(x_n,r) \right) = 1$. Thus for any $r>0 $ and any $x \in \X_{1,n}$:
$$ P_{1,n}\left(B(x,r) \right) \geq a r^b  \wedge 1$$
and $P_{1,n}$ also belongs to $\mathcal{P}(a,b,\bigM)$.

The probability measure  $P_{0} $ is absolutely continuous with respect to $P_{1,n}$ and the density of $P_{0}$ with respect to $P_{1,n}$  is $
p_{0,n} :=   \frac n {n - 1} \one_{\{x\}}$.  
Then 
\begin{eqnarray*}
TV(P_{0},P_{1,n}) & = & \int_{\bigM} | 1 -  \frac n {n - 1} \one_{\{x\}}  |� \; d P_{1,n}  \\
& = &  \frac 2 {n} .
\end{eqnarray*}
Next, $ \left[ 1 - TV(P_{0},P_{1,n}) \right] ^{2 n }  = (1-\frac 2 n ) ^{2n}  \rightarrow e^{-4}$  as $n$ tends to infinity. It remains to compute
$\bottle(  \dgm(\Filt(\X_{0})), \dgm(\Filt(\X_{1,n})) )$. We only consider here the Rips case, the other filtrations can be treated in a similar way.
The bar code of $\Filt(\X_{0})$ is composed of only one segment $(0, +\infty)$ for the 0-cycles. The barcode of  $\Filt(\X_{1,n})$ is composed of the
segment of $\Filt(\X_{0})$ and one more 0-cycle : $(0,\rho(x,x_n))$. Thus we have:
\begin{eqnarray*}
 \bottle(  \dgm(\Filt(\X_{0})), \dgm(\Filt(\X_{1,n})) ) &=& d_\infty \left( \Delta ,   (0,\rho(x,x_n)) \right) \\
  &= &   \frac {\rho(x,x_n)} 2  .
\end{eqnarray*}
The proof is then complete using  Lecam's Lemma (Lemma \ref{Lem:Lecam}).

\subsection{Proofs for Section \ref{sub:OptSmoothBoundaries}} \label{proofSec41}

\begin{lemma} \label{lem:dhG0front}
\begin{enumerate}
\item Under assumption $[B]$, we have 
$ \dhaus(G_0,  \X_{f}) = 0$.
\item Under Assumptions  $[A]$ and $[B]$, $\mu$ satisfies a standard assumption with $b=\alpha + d$ and with  $a$ depending on $\mathcal F(\alpha)$.
\end{enumerate}
\end{lemma}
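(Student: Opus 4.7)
The plan is to exploit assumption $[B]$ to show that $\X_f$ and $G_0$ lie at zero Hausdorff distance, and then combine $[A]$ with $[B]$ to construct, for each $x \in \X_f$ and small $r > 0$, a sub-ball of $B(x, r)$ of radius proportional to $r$ on which $f$ is bounded below by a constant times $r^{\alpha}$.

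For the first statement, I would begin by noting that $f$ vanishing outside $G_0$ forces $\mu(\overline{G_0}) = 1$, so $\X_f \subseteq \overline{G_0}$ and every $y \in \X_f$ satisfies $d(y, G_0) = 0$. For the reverse bound, it suffices to prove $G_0 \subseteq \X_f$, i.e., that every open neighborhood $V$ of any $x \in G_0$ has $\mu(V) > 0$. If $x$ lies in the topological interior of $G_0$, this is immediate because $f > 0$ on an open sub-ball of $V$, so some superlevel set $\{f > 1/n\}$ has positive Lebesgue measure there. If $x \in G_0 \cap \partial G_0$, I would apply $[B]$ with $\varepsilon = \rho/(2 C_b)$, where $V \supseteq B(x, \rho)$, to produce $y \in V \cap I_\varepsilon(G_0)$, hence an open ball $B(z, \varepsilon) \subseteq G_0$ meeting $V$, and the same positivity argument yields $\mu(V) > 0$.

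For the second statement, given $x \in \X_f$ and small $r > 0$, a similar case split (either $d(x, \partial G_0) \geq 2\varepsilon$ and $z := x$ works, or $d(x, \partial G_0) < 2\varepsilon$ and $[B]$ applied at a nearby boundary point $x'$ yields an inner-ball center $z$ with $d(x, z) \leq (C_b + 3)\varepsilon$) produces a point $z$ with $B(z, \varepsilon) \subseteq G_0$ and $B(z, \varepsilon/2) \subseteq B(x, r)$ for the choice $\varepsilon := r/(C_b + 4)$. Every $w \in B(z, \varepsilon/2)$ then satisfies $d(w, \partial G_0) \geq \varepsilon/2$, so $[A]$ gives $f(w) \geq C_a (\varepsilon/2)^{\alpha}$ as soon as this quantity is below $\delta_a$ (true for $r$ small enough). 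Integrating,
\[
\mu(B(x, r)) \;\geq\; V_d\, (\varepsilon/2)^d \cdot C_a\, (\varepsilon/2)^{\alpha} \;\geq\; a\, r^{d + \alpha}
\]
for a constant $a$ depending only on the parameters defining $\mathcal{F}(\alpha)$, where $V_d$ is the volume of the unit Euclidean ball. For larger $r$ the bound $1 \wedge a\, r^{d+\alpha}$ is automatic, so $\mu$ satisfies the $(a, d+\alpha)$-standard assumption.

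The main obstacle I anticipate is the careful bookkeeping in the interior-versus-boundary case split combined with selecting a single scale $\varepsilon$ proportional to $r$ that simultaneously satisfies $\varepsilon \leq \varepsilon_0$ so that $[B]$ applies, forces $B(z, \varepsilon/2) \subseteq B(x, r)$, and makes $C_a (\varepsilon/2)^{\alpha} \leq \delta_a$ so that $[A]$ produces the polynomial lower bound rather than the constant $\delta_a$. All three constraints amount to a linear threshold on $r$, beyond which the standard assumption is trivial by wedging with $1$.
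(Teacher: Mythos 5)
Your proof is correct and follows essentially the same route as the paper: part 1 via the sandwich $G_0 \subseteq \X_f \subseteq \overline{G_0}$ obtained from positivity of $f$ on inner balls supplied by $[B]$, and part 2 by using $[B]$ to place a ball $B(z,\varepsilon)\subseteq G_0$ with $\varepsilon \propto r$ inside $B(x,r)$ and then lower-bounding $f$ there via $[A]$. Your variants (the explicit interior/near-boundary case split, and integrating the constant bound $C_a(\varepsilon/2)^\alpha$ over the half-radius ball instead of the cone profile $C_a(\varepsilon-\|u-z\|)^\alpha$ over the full ball) are if anything slightly more careful, and yield the same order $r^{\alpha+d}$.
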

\begin{proof}
First, note that we always have
\begin{equation} \label{eq:G0Xmu}
 \overset{\circ}{G_0}  \subset  \X_{f} \subset \widebar{G_0}.
\end{equation}
Indeed, if $ \overset{\circ}{G_0} \cap ( \chi \setminus \X_{f}) $ is non empty, let $x$ be in the intersection. Then there exists $\varepsilon >0$
such that $B(x,\varepsilon) \subset G_0$ and $B(x,\varepsilon) \subset ( \chi \setminus \X_{f})$ since $\X_{f}$ is assumed to be closed. The
first inclusion then gives that $\mu(B(x,\varepsilon)) > 0 $ whereas the second inclusion gives that $\mu(B(x,\varepsilon)) = 0 $. Thus  $
\overset{\circ}{G_0} \cap ( \chi \setminus \X_{f}) $ is empty, the second inclusion in (\ref{eq:G0Xmu}) is obvious since $\X_{f}$ is assumed to
be closed. 

Then, 
\begin{eqnarray}
 \dhaus(\X_{f},G_0) & =&  \max ( \sup_{x \in \X_{f}} d(x,G_0) , \sup_{x \in G_0} d(x,\X_{f}) ) \notag \\
 & =&  \max ( \sup_{x \in \X_{f}} d(x, \widebar{G_0}) , \sup_{x \in \widebar{G_0}} d(x,\X_{f}) ) \notag \\
& =&   \sup_{x \in \widebar{G_0}} d(x,\X_{f}) \notag \\
& =&   \sup_{x \in \partial G_0} d(x,\X_{f}) \label{partialG0Xmu}
\end{eqnarray}
where we use the continuity of the distance function for the second equality and (\ref{eq:G0Xmu}) for the two last ones. It follows from assumption
$[B]$ that for any $x \in \partial G_0$, $d(x,\overset{\circ}{G_0} ) = 0$. Thus $d(x,\X_{f} ) = 0$ according to (\ref{eq:G0Xmu}) and we have proved
that (\ref{partialG0Xmu}) is equal to zero.

We now prove the second point of the Lemma. Let $x \in \bar{G_0}$ and let $r >0 $ such that 
\begin{equation}
\label{eq:rlessr0}
 \frac r 2  \left(1 \wedge \frac 1 {C_b}  \right)   <    \varepsilon_0  \wedge \left( \frac {\delta_a}{ C_a} \right) ^{1/ \alpha}   .
\end{equation}
 According to Assumption
$[B]$, for $\varepsilon = \frac r 2  \left(1 \wedge \frac 1 {C_b}  \right)$, there exists $y \in I_{\varepsilon} (G_0)$ such that $d(x,y) \leq \ C_b
\varepsilon 
\leq \frac r 2 $. Then, there exists $z \in I_{\varepsilon}$ such that $y \in B(z,\varepsilon)  \subset I_{\varepsilon}$. Since $ \varepsilon \leq
\frac r 2  $ we find that $B(z,\varepsilon) \subset B(x,r) \cap G_0 $. Thus,
\begin{eqnarray*}
 \mu \left(B(x,r) \right)  & \geq  &  \int_{B(z,\varepsilon)} f(u) \ d \lambda (u)  \\
   & \geq  &  \int_{B(z,\varepsilon)} \delta_a  \wedge C_a d(u, \partial G_0 )^\alpha  \ d \lambda (u) \\
& \geq  & C_a  \int_{B(z,\varepsilon)} \left( \varepsilon - \|u - z\| \right)^\alpha \ d \lambda (u) \\
& \geq  & C_a s_{d-1}  \int_{0}^\varepsilon  \left( \varepsilon - r \right)^\alpha r ^{d-1} \ d r   
\end{eqnarray*}
where $s_{d-1}$ denotes the surface area of the unit $d-1$-sphere of $\R^d$, and where we have used Assumption $[A]$ for the second inequality and the
fact $C_a \varepsilon^\alpha \leq \delta_a$ for the third one. Finally we find
that for any $r$ satisfying (\ref{eq:rlessr0}):
\begin{eqnarray*}
\mu \left(B(x,r) \right) & \geq  & \frac{ C_a s_{d-1} (d-1) !  }{(\alpha+ 1) \dots(\alpha + d) } \varepsilon^{\alpha + d} \\
 & \geq  &  \frac{ C_a s_{d-1} (d-1) !  (1 \wedge \frac 1 {C_b})^{\alpha + d}}{ 2 ^{\alpha +d}(\alpha+ 1) \dots(\alpha + d) } 
  r^{\alpha +d}
 \end{eqnarray*}
and we obtain that $\mu$ satisfies that standard assumption with $b=\alpha + d$.
\end{proof}

\subsubsection*{Proof of Proposition ~\ref{prop:dplusalpha}}

The first point of the proposition is an immediate consequence of Theorem 3 in \cite{SinghScottNowak09} and Lemma~\ref{lem:dhG0front}. We now prove
the lower bound by adapting some ideas from the proof of Proposition 3 in \cite{SinghScottNowak09} about the Hausdorff lower bound. At the price
of loosing a logarithm term in the lower bound, we propose here a proof based on a two-alternative analysis.

The function $f_0$ is defined on $ \chi$ as follows for  $r_0 > 0$ small enough:
 $$f_0 = \left\{ 
    \begin{array}{ll}
        C_a \|x\| ^\alpha & \mbox{if }  \| x  \|  \leq r_0  \\
        C_ 0 & \mbox{if } r_0   \leq  \| x  \|  \leq 2 r_0 \\ 
	C_a (3 r_0 - \|x\|) ^\alpha & \mbox{if } 2 r_0 \leq  \| x  \|  \leq 3 r_0 \\
	0 & \mbox{elsewhere }
    \end{array}
\right.
$$
where 
$$C_0 =   \frac{  1 - C_a s_{d-1} r_0 ^ {d + \alpha} (\frac 1 {d+  \alpha} + I _\alpha) }{   s_{d-1} r_0 ^ {d} (2 ^d - 1 ) / d } 
\hskip 1cm \textrm{ with } I _\alpha=  \int_{2  } ^{3  } d ^{d-1} (3 - u ) ^{\alpha}  d u . $$ 
For $n \geq 1$  let $\varepsilon _n := n ^{-1 / (d + \alpha)}$, the function $f_{1,n}$ is defined on $ \chi$ by 
 $$f_{1,n} = \left\{ 
    \begin{array}{ll}
	 \|x\| ^\alpha & \mbox{if }  \varepsilon_n \leq  \| x  \|  \leq r_0 \\
        C_ {1,n} & \mbox{if } r_0   \leq  \| x  \|  \leq 2 r_0 \\ 
	C_a (3 r_0 - \|x\|) ^\alpha & \mbox{if } 2 r_0 \leq  \| x  \|  \leq 3 r_0 \\
	0 & \mbox{elsewhere }
    \end{array}
\right.
$$
where 
\begin{eqnarray*}
C_ {1,n} & =&    \frac{  1 - C_a s_{d-1}  \left\{ r_0 ^ {d + \alpha} (\frac 1 {d+   \alpha} + I _\alpha) - \frac{ \varepsilon_n
^{d+\alpha}}{d+\alpha} \right\} }{  s_{d-1} r_0 ^ {d} (2 ^d - 1 ) / d} \\
& =&  C_0 + \frac{d C_a  \varepsilon_n ^{d+\alpha}  }{(d+\alpha) r_0 ^ {d} (2 ^d - 1 ) }.
\end{eqnarray*}
We assume that $\delta_a$ is small enough so that we can choose $r_0$ such that $\delta_a \leq  C_0$  for $n$ large enough. Then $f_{0}$ and $f_{1,n}$
are both densities and they both belong to $\mathcal F (\alpha)$ for $n$ large enough. The support of  $f_{0} d \lambda
$ is equal to $ \X_{0} := \bar B (0, 3 r_0)$ whereas the support of  $f_{1,n} d \lambda $ is equal to $ \X_{1,n} = \bar B (0, 3 r_0) \setminus
\bar B (0, \varepsilon_n)$. Next,
\begin{eqnarray*}
\TV ( f_0 \, d \lambda, f_{1,n} \, d \lambda )  & = &  \int_{\chi} |f_{0}   - f_{1,n} |  d x  \\
& = &  s_{d-1} C_a \int_{0}^{\varepsilon_n }  r  ^ {\alpha + d-1} \, d r  + s_{d-1} \int_{r_0} ^ {2 r_0} ( C_{1,n} - C_{0}) r  ^ {d-1}  d r 
\\
& = & \frac{ 2  s_{d-1}C_a }{d+\alpha}  \varepsilon_n ^{d+\alpha}  
\end{eqnarray*}
Note that $(1 - \TV ( f_0 \, d \lambda, f_{1,n} \, d \lambda ) ]^{2 n } \rightarrow \exp( -  \frac{4  s_{d-1}C_a }{d+\alpha})$ as $n$ tends to
infinity. It remains to compute $\bottle(  \dgm(\Filt(\X_{0})), \dgm(\Filt(\X_{1,n})) )$. We only consider here the Rips case, the
other filtrations can be treated in a similar way. The bar code of $\Filt(\X_{0})$ is composed of only one segment $(0, +\infty)$ for the 0-cycles.
The barcode of  $\Filt(\X_{1,n})$ is composed of the segment of $\Filt(\X_{0})$ and one more 1-cycle : $(0,2 \varepsilon_n)$. Thus we have:
\begin{eqnarray*}
 \bottle(  \dgm(\Filt(\X_{0})), \dgm(\Filt(\X_{1,n})) ) &=& d_\infty \left( \Delta ,   (0,\varepsilon) \right) \\
  &= &    \varepsilon _n .
\end{eqnarray*}
We then finish the proof using  Lecam's Lemma.

\subsection{Proof of Proposition ~\ref{prop:GPVW}}

We only need to prove the lower bound since the upper bound is a direct corollary of Theorem~3 in  \cite{GenoveseEtAl2012}.  To prove the lower bound,
we may use the particular manifolds defined in \cite{GPVW12} and also used by the same authors for the proof of Theorem~2 in \cite{GenoveseEtAl2012}.
Without loss of generality, we assume that $\chi = [-L,L] ^D$ and that $\kappa <   L  / 2 $. For $ \ell \leq L$, let $M$ and  $M'$ be the two
manifolds of $\chi$ defined by 
 $$ M = [-\ell, \ell]^D \cap  \{ x \in \chi  \,  |� \,  x_{d+1} = \dots = x_{D} = 0 \}  \quad \textrm{ and } \quad M' =   2 \kappa e_{d+1}  + M $$
 where $e_{d+1}$ is the $d+1$-th vector of the canonical basis in $\R^D$. We  assume that $\ell$ is chosen so that $ b < 2 (2 \ell)^{-d}  <  B $. Let
$\mu_0$ be the uniform measure on $\X_0 := M \cup M'$ and then $\mu_0 \in \mathcal H $.

According to Theorem  6 in \cite{GPVW12}, for $0 <  \gamma < \kappa$, we can define a manifold $M_\gamma$ which can be seen as a perturbation of $M$ such that:
\begin{itemize}
\item  $\Delta (M_\gamma) = \kappa$
\item  $\dhaus(M_\gamma,M) = \gamma $ and $\dhaus(M_\gamma,M') = 2 \kappa - \gamma $
\item  If $A = \{ x \in M_\gamma \, | \, x \notin M\}$ then $\mu_1 ( A) \leq C \gamma^{d/2}$ where $C >0$ and where  $\mu_1 $ is the uniform measure on $\X_1:=M_\gamma \cup M'$. 
\end{itemize}
For small enough $\gamma$ we see that $\mu_1$ satisfies $[H_2]$ and thus $\mu_1 \in \mathcal H $. 

As before, we only consider here filtrations of Rips complexes. The persistence diagrams of $\Filt(\X_0)$  and $\Filt(\X_1)$ are exactly the same except for the diagram of 0-cycles : the first filtration has a bar code with a segment $(0,2 \kappa)$ whereas the corresponding bar code for $\Filt(\X_1)$ is $(0,2 \kappa - \gamma)$. Thus, $\bottle( \Filt(\X_0), \Filt(\X_1)) = \gamma$. Moreover, $\TV(\mu_0,\mu_1) \leq |�\mu_0(A)  -  \mu_{1} (A) |  \leq C \gamma^{d/2}$. Finally, we choose $\gamma = (1/n) ^{d/2}$  as in the proof of Theorem~2 in \cite{GenoveseEtAl2012} and we conclude using Lecam's Lemma.

\section*{Acknowledgements}
The authors acknowledge the support of the European project CG-Learning EC contract No. 255827, and of the ANR project GIGA (ANR-09-BLAN-0331-01).
\bibliographystyle{plain}
\bibliography{MetricBiblio}

\end{document}